\newtheorem{thm}{Theorem}[section]%[chapter]
\newtheorem{lemma}[thm]{Lemma}
\newtheorem{conjecture}[thm]{Conjecture}
\newtheorem{prop}[thm]{Proposition}
\newtheorem{corollary}[thm]{Corollary}
\newtheorem{clm}[thm]{Claim}
\newcommand\ex{\ensuremath{\mathrm{ex}}}
\newcommand{\ignore}[1]{}
\title{A note on stability for maximal $F$-free graphs}
\author{D\'aniel Gerbner\footnote{Alfr\'ed R\'enyi Institute of Mathematics, E-mail: \texttt{gerbner@renyi.hu.} Research supported by the
    National Research, Development and Innovation Office -- NKFIH under the
    grants FK 132060, KKP-133819, KH130371 and SNN 129364.}}
\date{}
\begin{document}

\maketitle

\begin{abstract}
    Popielarz, Sahasrabudhe and Snyder in 2018 proved that maximal $K_{r+1}$-free graphs with $(1-\frac{1}{r})\frac{n^2}{2}-o(n^{\frac{r+1}{r}})$ edges contain a complete $r$-partite subgraph on $n-o(n)$ vertices. This was very recently extended to odd cycles in place of $K_3$ by Wang, Wang, Yang and Yuan. We further extend it to some other 3-chromatic graphs, and obtain some other stability results along the way.
\end{abstract}

\section{Introduction}

One of the most basic questions of graph theory is the following: given a graph $F$, how many edges can an $n$-vertex graph $G$ have if it is $F$-free, i.e. $G$ does not contain $F$ as a subgraph? This quantity is denoted by $\ex(n,F)$. Tur\'an's theorem \cite{T1941} states that among $n$-vertex $K_{r+1}$-free graphs, the most edges are in the complete $r$-partite graph with each partite set of order $\lfloor n/k\rfloor$ or $\lceil n/k\rceil$. This graph is now called the \textit{Tur\'an graph} and we denote it by $T_r(n)$. We denote the number of edges of $T_r(n)$ by $t_r(n)$.

The Erd\H os-Stone-Simonovits theorem \cite{ES1966,ES1946} states that it $r\ge 2$ and $F$ has chromatic number $r+1$, then $\ex(n,F)=(1+o(1))t_r(n)$. Erd\H os and Simonovits \cite{} showed that if an $n$-vertex graph $G$ is $F$-free and has almost $t_r(n)$ edges, then its structure is very similar to the structure of the Tur\'an graph. This phenomenon is called \textit{stability} and there are several non-equivalent stability theorems concerning the same graphs, where the differences come from the precise form of ``almost'', ``structure'' and ``very similar'' in the previous sentence.
In particular, the Erd\H os-Simonovits stability theorem \cite{erd1,erd2,sim} says that if $G$ is $F$-free on $n$ vertices with $t_r(n)-o(n^2)$ edges, then we can obtain $T_r(n)$ by adding and deleting $o(n^2)$ edges.

Tyomkin and Uzzel \cite{tyuz} initiated the study of new stability questions. We say that a graph is \textit{$F$-saturated} if it is $F$-free, but adding any new edge would create a copy of $F$. We also say that $G$ is \textit{maximal} with respect to the $F$-free property. When studying $\ex(n,F)$, one might assume without loss of generality that the $n$-vertex $F$-free graph $G$ is $F$-saturated, but if we consider the structure of $G$, this is a useful assumption.
Consider a $K_{r+1}$-saturated graph with close to $t_r(n)$ edges. Does it contain a large complete $r$-partite subgraph? Popielarz, Sahasrabuddhe and Snyder \cite{pss} answered this question with the following theorem.

\begin{thm}[Popielarz, Sahasrabuddhe and Snyder \cite{pss}]\label{posasn}
Let $r\ge 2$ be an integer. Every $K_{r+1}$-saturated graph $G$ on $n$ vertices with $t_r(n)-o(n^{\frac{r+1}{r}})$ edges contains a complete $r$-partite subgraph on $(1-o(1))n$ vertices. Moreover, there are $K_{r+1}$-saturated graphs on $n$ vertices with $t_r(n)-\Omega(n^{\frac{r+1}{r}})$ edges that do not contain a complete $r$-partite subgraph on $(1-o(1))n$ vertices.
\end{thm}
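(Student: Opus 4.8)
The plan is to treat the two halves separately. For the first statement the idea is to combine a strong stability lemma with the fact that $G$ is $K_{r+1}$-\emph{saturated}, not merely $K_{r+1}$-free. First I would show that $G$ can be obtained from $T_r(n)$ by adding and deleting only $o(n^{(r+1)/r})$ edges: there is a partition $V(G)=V_1\cup\dots\cup V_r$ into near-balanced parts whose total number $D$ of \emph{defects} -- edges inside parts plus non-edges between parts -- satisfies $D=o(n^{(r+1)/r})$. Let $B$ be the set of vertices incident with at least $n^{1/r}$ defects; then $|B|\le 2D/n^{1/r}=o(n)$, and in $G':=G-B$ every vertex has fewer than $n^{1/r}$ defects. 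It then remains to delete $o(n)$ further vertices to be left with a complete $r$-partite subgraph on $(1-o(1))n$ vertices.

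Two facts drive the cleanup. First, $G'$ has no edge inside a part: if $u,v\in V_i$ were adjacent, then, as each misses fewer than $n^{1/r}$ vertices of every other part, I could pick greedily, for $k\ne i$, a vertex $w_k\in(V_k\cap N(u)\cap N(v))\setminus B$ with $w_1,\dots,w_{r-1}$ pairwise adjacent (each candidate set still has size $\ge|V_k|-O(rn^{1/r})-|B|>0$), and $\{u,v,w_1,\dots,w_{r-1}\}$ would be a $K_{r+1}$. Second, by saturation, a missing cross pair $uv$ of $G'$ lies in a $K_{r-1}$ of $N_G(u)\cap N_G(v)$; since $G$ is almost $r$-partite this $K_{r-1}$ is a transversal of $r-1$ parts, hence contains a vertex inside the part of $u$ adjacent to $u$, or inside the part of $v$ adjacent to $v$ -- an edge inside a part of $G$, whose other endpoint must lie in $B$ by the first fact. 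So every missing cross pair of $G'$ is incident with the set $W$ of vertices of $G'$ having a same-part neighbour in $B$, and $G'-W$ is complete $r$-partite. The \textbf{main obstacle} is to show $|W|=o(n)$, since a priori a vertex of $B$ may have $\sim n/r$ same-part neighbours; this needs a sharpening of the first fact (an endpoint of an inside-a-part edge already omits a constant fraction of some other part, and there are only $o(n^{1/r})$ such vertices) together with a careful, likely iterated, count -- again using saturation to show these configurations cost many edges -- of how many vertices can be forced into $W$.

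For the second statement, in the case $r=2$ I would blow up a small gadget inside a near-Tur\'an graph. Fix $t\approx\sqrt n$, split $n-t$ vertices into $A=A_1\cup\dots\cup A_t$ and $B'=B'_1\cup\dots\cup B'_t$ with blocks of size $\approx n/(2t)$, and join $A_i$ completely to $B'_j$ whenever $i\ne j$. Add $t$ vertices $c_1,\dots,c_t$, join $c_i$ to $A_i\cup B'_i$, and let $c_1,\dots,c_t$ span a complete bipartite graph $K_{\lceil t/2\rceil,\lfloor t/2\rfloor}$ whose two sides use disjoint block indices. One verifies that this graph is triangle-free (every neighbourhood is independent), is $K_3$-saturated (each non-edge -- inside some $A_i\cup B'_i$, inside a side of the $c$-gadget, or between an $A_i$ or $B'_i$ and a non-incident $c_k$ -- has an explicit common neighbour), has $t_2(n)-\Theta(n^{3/2})$ edges, and has no complete bipartite subgraph on more than $\approx n/2$ vertices (a complete bipartite subgraph meeting both $A$ and $B'$ must confine these parts to a single block index).

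For general $r$ the idea is the same -- a near-Tur\'an graph with a matched-block twist inside two of its parts, whose resulting cross non-edges are re-saturated by an auxiliary gadget -- but here I anticipate the real difficulty: a missing cross pair between two parts has common neighbourhood inside a single other part, which is an independent set, so restoring $K_{r+1}$-saturation forces the gadget to carry an \emph{edge} inside that part, and organising such edges (together with the edge deletions needed to keep the graph $K_{r+1}$-free) so that the total edge loss remains $\Theta(n^{(r+1)/r})$ is the delicate point of the construction.
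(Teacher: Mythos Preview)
This theorem is not proved in the present paper: it is quoted from Popielarz, Sahasrabudhe and Snyder \cite{pss} as background, and the paper supplies no argument for it (the paper's own proofs concern Theorems~\ref{main} and~\ref{elkrit}, and Lemma~\ref{harmad} is explicitly left unproved). Hence there is no ``paper's own proof'' to compare your proposal against.

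That said, a few comments on your sketch. Your outline for the first half follows the general shape of the argument in \cite{pss}: pass to a near-$r$-partition with few defects, strip off high-defect vertices, show the remainder has no intra-class edges, and then use saturation to control the missing cross pairs. You correctly identify the crux --- bounding $|W|$ --- and you are honest that this step is not yet done. In \cite{pss} this is exactly where the work lies, and it is not a routine count: one needs the full strength of the $o(n^{(r+1)/r})$ hypothesis and a finer structural dichotomy for vertices in $B$, so ``a careful, likely iterated, count'' is not yet a proof. Your parenthetical idea (an endpoint of an internal edge omits a constant fraction of some other part, hence there are $o(n^{1/r})$ such vertices) does not by itself give $|W|=o(n)$, since each such vertex could still have order-$n$ many same-part neighbours.

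For the lower bound your $r=2$ construction is correct and pleasant (triangle-freeness, saturation, the edge count, and the $\approx n/2$ cap on complete bipartite subgraphs all check out). It is not the construction in \cite{pss}, which is built from a dense $C_4$-free bipartite graph, but yours achieves the same $\Theta(n^{3/2})$ deficit. For $r\ge 3$ you again flag the right difficulty --- re-saturating the deleted cross pairs forces intra-part edges in the gadget --- but you have not actually produced a construction, so this half remains a plan rather than a proof.
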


Wang, Wang, Yang and Yuan \cite{wwyy} considered the same problem for odd cycles in place of cliques and showed the following.

\begin{thm}\label{kinai}
Let $k\ge 2$ be an integer. Every $C_{2k+1}$-saturated graph $G$ on $n$ vertices with $t_2(n)-o(n^{\frac{3}{2}})$ edges contains a complete bipartite subgraph on $(1-o(1))n$ vertices. Moreover, there are $C_{2k+1}$-saturated graphs on $n$ vertices with $t_2(n)-\Omega(n^{\frac{3}{2}})$ edges that do not contain a complete bipartite subgraph on $(1-o(1))n$ vertices.
\end{thm}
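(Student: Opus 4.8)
The plan is to follow the strategy of Popielarz, Sahasrabuddhe and Snyder, combining the stability of $C_{2k+1}$-freeness with the extra leverage coming from saturation, and using the classical fact that $\ex(n,C_{2k+1})=\lfloor n^2/4\rfloor$ for $n$ large with $K_{\lfloor n/2\rfloor,\lceil n/2\rceil}$ the unique extremal graph. First I would fix a $C_{2k+1}$-saturated $G$ on $n$ vertices with $e(G)\ge \lfloor n^2/4\rfloor-f(n)$, $f(n)=o(n^{3/2})$, and apply the Erd\H os--Simonovits stability theorem to get a partition $V(G)=A\cup B$ maximising the cut (equivalently minimising $e(A)+e(B)$). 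Then $e(A)+e(B)=o(n^2)$, every vertex of $A$ has at least as many neighbours in $B$ as in $A$ and symmetrically, and $|A||B|\ge e(G)-o(n^2)$ forces $|A|,|B|=(1/2+o(1))n$; writing $\bar e$ for the number of non-edges between $A$ and $B$ we get $\bar e=|A||B|-e(G)+e(A)+e(B)\le f(n)+e(A)+e(B)$.

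Second I would pin down the structure using $C_{2k+1}$-freeness alone. For every $v$ the graph $G[N(v)]$ is $P_{2k}$-free (a path on $2k$ vertices in $N(v)$ closes with $v$ into a $C_{2k+1}$), so $e(G[N(v)])\le(k-1)d(v)$ by Erd\H os--Gallai. More decisively, if $uv$ is a within-part edge then $G$ has no alternating $u$--$v$ path of length $2k$ through the bipartite part, since such a path plus $uv$ is a $C_{2k+1}$; and between two vertices of $A$ (or of $B$) each missing few neighbours on the other side one can always route such a path through the $(1-o(1))n$ vertices of small cross-defect. Hence one endpoint of $uv$ has almost no neighbours on the other side. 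Feeding this dichotomy into $\bar e\le f(n)+e(A)+e(B)$ via a two-step bootstrap yields: $\bar e=o(n^{3/2})$; a set $W$ of $o(\sqrt n)$ vertices, each of degree $o(\sqrt n)$, meeting every within-part edge (so $|N(W)|=o(n)$); and, letting $A_g\subseteq A$, $B_g\subseteq B$ be the vertices missing at most $\sqrt n$ neighbours across, $|A\setminus A_g|+|B\setminus B_g|=o(n)$, the sets $A_g,B_g$ are disjoint from $W$ and independent, and $G[A_g\cup B_g]$ is bipartite with parts $A_g,B_g$ and misses only $o(n^{3/2})$ edges.

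Third — and this is the heart of the matter — I would use $C_{2k+1}$-saturation to show the $o(n^{3/2})$ non-edges of $G[A_g\cup B_g]$ admit a vertex cover of size $o(n)$; deleting such a cover, the set $W$, and the $o(n)$ vertices outside $A_g\cup B_g$ then leaves a complete bipartite graph on $(1-o(1))n$ vertices. Suppose instead there is a matching $a_1b_1,\dots,a_mb_m$ of these non-edges with $m=\Omega(n)$. Saturation gives for each $i$ a path $P_i$ of length $2k$ in $G$ from $a_i\in A_g$ to $b_i\in B_g$; as $G$ is bipartite off the within-part edges and the endpoints lie on opposite sides, $P_i$ uses an odd (hence positive) number of within-part edges, so $P_i$ meets $W$. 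In the clique case $C_3$ (treated in \cite{pss}) this is immediate: the unique within-part edge of the length-$2$ path $a_i,c_i,b_i$ is incident to $a_i$ or $b_i$, forcing $c_i\in W$ and $a_i\in N(W)$, against $|N(W)|=o(n)<m$. For $k\ge2$ the length-$2k$ path is no longer controlled by a single common neighbour, and one must follow $P_i$ segment by segment — between consecutive vertices of $W$ it is a short alternating walk — repeatedly applying the ``one endpoint is starved'' dichotomy of the previous step along these alternating segments; the outcome is again that all but $o(n)$ of the $a_i$ lie in a set of size $o(n)$ assembled from $N(W)$ and a bounded number of further exceptional sets forced by the Erd\H os--Gallai bound, contradicting $m=\Omega(n)$. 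I expect this step to be the main obstacle: it is exactly where the exponent $3/2$ is sharp and where the extension past the clique case of \cite{pss} really happens.

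For the ``Moreover'' part I would adapt the extremal construction of \cite{pss}. Take $K_{\lfloor n/2\rfloor,\lceil n/2\rceil}$ with sides $A,B$, remove an $\Omega(n)$-edge matching of cross-edges incident only to non-exceptional vertices (so no $o(n)$ vertices cover them, hence no complete bipartite subgraph on $(1-o(1))n$ vertices can avoid them), and place a gadget on $\Theta(\sqrt n)$ vertices inside $A$, joined sparsely to the rest, so that (i) its $\Theta(\sqrt n)\cdot|B|=\Theta(n^{3/2})$ forgone cross-edges make $e(G)=\lfloor n^2/4\rfloor-\Theta(n^{3/2})$; (ii) its small degrees prevent it from enlarging any complete bipartite subgraph; (iii) putting back any removed cross-edge $xy$, or any non-edge incident to the gadget, creates a $C_{2k+1}$ routed through a short odd detour inside the gadget (using $2k\ge4$); and (iv) the gadget contains and introduces no $C_{2k+1}$. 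Verifying (i)--(iv) — in particular choosing the gadget and its attachment so that saturation holds while $C_{2k+1}$-freeness is preserved — is the only delicate point of the construction.
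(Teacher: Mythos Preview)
The paper does not prove Theorem~\ref{kinai}; it is quoted as a result of Wang, Wang, Yang and Yuan \cite{wwyy} and used only as background motivation for Conjecture~\ref{nagysej}. There is therefore no proof in the present paper to compare your proposal against.

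That said, a brief comment on the proposal itself. Your outline follows the natural PSS template, and you are honest that the third step --- tracking the saturation path $P_i$ of length $2k$ through the exceptional set $W$ --- is where the real work lies. As written, however, that step is not a proof: you assert that ``repeatedly applying the dichotomy along alternating segments'' forces all but $o(n)$ of the $a_i$ into a set of size $o(n)$, but you have not shown why the bounded-length segments between successive visits to $W$ give enough control, nor why the exceptional sets you accumulate stay $o(n)$ in total. This is precisely the technical heart of \cite{wwyy}, and the argument there is more delicate than a bootstrap of the $k=1$ case. Similarly, in the construction you describe the gadget only by the properties (i)--(iv) it should satisfy; exhibiting one that is simultaneously $C_{2k+1}$-free and makes the graph $C_{2k+1}$-saturated is not automatic. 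So the proposal is a reasonable plan, but the two places you flag as ``the main obstacle'' and ``the only delicate point'' are in fact the entire content of the theorem, and neither is carried out.
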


Here we study the same problem for other graphs. Let us start with a bold conjecture.

\begin{conjecture}\label{nagysej}
Let $r\ge 2$ be an integer and $F$ be a graph with chromatic number $r+1$. Then every $F$-saturated graph $G$ on $n$ vertices with $t_r(n)-o(n^{\frac{r+1}{r}})$ edges contains a complete $r$-partite subgraph on $(1-o(1))n$ vertices.
\end{conjecture}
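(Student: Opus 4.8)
The plan is to imitate the two-step architecture behind Theorems~\ref{posasn} and~\ref{kinai}: first use stability to extract a near-extremal $r$-partition of $G$, then feed the saturation hypothesis into that structure to locate a large complete $r$-partite subgraph.

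\emph{Step 1 (from stability to a sharp partition).} Since $\chi(F)=r+1$, the Tur\'an graph $T_r(n)$ is $F$-free, so $G$ has at least $t_r(n)-o(n^{\frac{r+1}{r}})$ edges and is thus within $o(n^2)$ of $t_r(n)$; the Erd\H os--Simonovits stability theorem then yields a partition $V(G)=V_1\cup\cdots\cup V_r$ for which $G$ differs from $T_r(n)$ in only $o(n^2)$ edges. The substantive task is to upgrade this to a partition with $o(n^{\frac{r+1}{r}})$ \emph{defect edges}, that is, $o(n^{\frac{r+1}{r}})$ edges inside the parts together with $o(n^{\frac{r+1}{r}})$ non-edges between the parts. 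Choosing the partition so as to minimise the number of defect edges, one would argue that $\omega(n^{\frac{r+1}{r}})$ defects, combined with $F$-freeness, would push the edge count below $t_r(n)-o(n^{\frac{r+1}{r}})$; for $F$ with a colour-critical edge this is close to the circle of ideas behind Simonovits's exact extremal result, and such a sharpened stability statement is presumably among the ``other stability results along the way''.

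\emph{Step 2 (cleaning) and Step 3 (saturation).} Call a vertex \emph{bad} if it is incident to an internal edge or misses more than $\sqrt n$ of its potential neighbours in some part $V_k$; the defect bound from Step~1 forces the set $X$ of bad vertices to have size $o(n)$, and we delete it. It remains to show that in $G-X$ the missing cross-pairs can be covered by $o(n)$ further vertices. Here saturation enters: if $u\in V_i\setminus X$, $v\in V_j\setminus X$ with $i\ne j$ and $uv\notin E(G)$, then $G+uv$ contains a copy of $F$; since $\chi(F)=r+1$ this copy is not $r$-colourable, so it cannot lie inside the complete $r$-partite graph on $V_1,\dots,V_r$, and therefore it must use at least one internal edge of $G$. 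Thus every missing cross-edge among good vertices ``witnesses'' an internal edge within a ball of radius $|V(F)|$; one would then bound the incidences between the $o(n^{\frac{r+1}{r}})$ internal edges and the missing cross-edges, exploiting that $u$ and $v$ have almost complete cross-neighbourhoods, and conclude that these missing cross-edges are covered by $o(n)$ vertices, which completes the argument.

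The hard part is Step~3 for an arbitrary $(r+1)$-chromatic $F$, and to a lesser extent the sharpening in Step~1. When $F=K_{r+1}$ the copy produced by adding $uv$ is extremely rigid --- $u$ and $v$ acquire a common $K_{r-1}$ meeting every other part, which localises everything --- and for odd cycles it is a short $u$--$v$ path, again rigid enough to count. For a general $F$ the new copy can be glued to the offending internal edge in many topologically distinct ways, the ``book''/``path'' substructure that drove the clique and odd-cycle counts has no canonical analogue, and it is not clear that a single internal edge is charged by only $O(n)$ missing cross-edges rather than, say, $\Theta(n^{1+1/r}\log n)$ of them; indeed the threshold $n^{\frac{r+1}{r}}$ itself may be $F$-dependent. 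I therefore expect this plan to settle the conjecture for $F$ with a colour-critical edge (and for the specific families treated in this note), while the full conjecture should require a new, $F$-uniform mechanism for controlling how the saturation-generated copies of $F$ interact with the near-Tur\'an partition.
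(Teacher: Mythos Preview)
The statement you are attempting is Conjecture~\ref{nagysej}, which the paper explicitly leaves open; there is no proof in the paper to compare against. The paper establishes only the special cases in Theorems~\ref{main} and~\ref{elkrit} (both with $r=2$), so a complete proof would go well beyond the paper, and your closing paragraph correctly concedes that your outline does not achieve this.

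On the substance, your three-step architecture does match the shape of the paper's arguments for the special cases, but several of the steps are not available in the generality you invoke. In Step~1, reducing the number of defect edges to $o(n^{(r+1)/r})$ for a general $(r+1)$-chromatic $F$ is not a known consequence of Erd\H os--Simonovits stability; the paper obtains the analogous structure (Lemmas~\ref{harmad} and~\ref{harmad2}) only under the extra hypothesis of a colour-critical edge or vertex, and even then the output is not a defect-edge bound but a small vertex set $T$ whose removal leaves an $r$-partite graph in which every surviving vertex has at most $|V(F)|$ neighbours in its own part. This bounded-internal-degree feature is what powers the paper's endgame, and it is absent from your formulation.

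Your Step~3 contains the essential gap, and it is somewhat larger than you indicate. When $uv$ is added and a copy $F^*$ of $F$ appears, $F^*$ may use vertices of the deleted set (your $X$, the paper's $T$), not just internal edges among good vertices; the paper spends most of its effort on precisely this, via the sets $U(v)$, $U_0$, $U'$, $U''$ and the K\H ov\'ari--S\'os--Tur\'an bound on $|U_0|$. More importantly, the paper never performs the global incidence count you propose (charging each missing cross-edge to a nearby internal edge and hoping the multiplicity is $O(n)$). Instead it exploits the specific shape of $F$: for Theorem~\ref{main} it enumerates all bipartitions of $F$ into two connected pieces $F_0,F_1$ and shows that one of the pieces has fewer than $n^{3/4}$ copies inside a part, using the bounded-internal-degree property; for Theorem~\ref{elkrit} it uses that $u$ or $v$ must lie in a triangle of $F^*$, forcing an adjacent vertex of $T$. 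Neither mechanism generalises to arbitrary $F$, which is exactly why the statement remains a conjecture in the paper.
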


Observe that in case of forbidden cliques or cycles, the complete multipartite subgraph must be an induced subgraph. This is not the case in general, as we discuss in Section 2. 

If the above conjecture holds, the term $o(n^{\frac{r+1}{r}})$ cannot be improved in general by Theorem \ref{posasn}. Moreover, every 3-chromatic graph $F$ contains an odd cycle, thus in case $r=2$,  the term $o(n^{\frac{3}{2}})$ cannot be improved for any graph $F$ by Theorem \ref{kinai}.

If Conjecture \ref{nagysej} does not hold, weaker versions still should. Let us propose two such versions. We say that a vertex or edge of a graph is \textit{color-critical}, if deleting that vertex or edge results in a graph with smaller chromatic number.

\begin{conjecture}\label{kissej}
Let $r\ge 2$ be an integer and $F$ be a graph with chromatic number $r+1$ and a color-critical edge. Then every $F$-saturated graph $G$ on $n$ vertices with $t_r(n)-o(n^{\frac{r+1}{r}})$ edges contains a complete $r$-partite subgraph $G'$ on $(1-o(1))n$ vertices.
\end{conjecture}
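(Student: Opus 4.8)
The plan is to combine the Erd\H os--Simonovits stability theorem with a two-stage ``cleaning'' argument: first isolate an exceptional set $Z$ of $o(n)$ vertices outside of which $G$ looks locally almost like the Tur\'an graph, and then use $F$-saturation together with the color-critical edge to upgrade ``almost'' to ``exactly'' on the remaining vertices, up to discarding $o(n)$ more. For the first stage, note that $e(G)=t_r(n)-o(n^2)$ and $G$ being $F$-free let the Erd\H os--Simonovits stability theorem produce a partition $V(G)=V_1\cup\dots\cup V_r$ that becomes $T_r(n)$ after changing $o(n^2)$ edges; choosing such a partition to minimise the number of ``wrong'' edges (edges inside parts plus non-edges between parts) and feeding the sharper hypothesis $e(G)=t_r(n)-o(n^{(r+1)/r})$ back in, standard estimates give that the number $W$ of within-part edges, the number $N$ of between-part non-edges, and the total imbalance of the $V_i$ are all $o(n^{(r+1)/r})$, and in particular $\sum_v|\deg_G(v)-(1-\tfrac1r)n|=o(n^{(r+1)/r})$. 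Fix $g(n)\to\infty$ with $g(n)=n^{o(1)}$ and let $Z$ be the set of vertices $v$ for which one of the following exceeds $n^{1/r}g(n)$: $|\deg_G(v)-(1-\tfrac1r)n|$, the number of neighbours of $v$ in its own part, or the number of non-neighbours of $v$ in the other parts. By the above, $|Z|=o(n)$; set $W_\ell=V_\ell\setminus Z$, so each part $W_\ell$ has size $(1+o(1))n/r$ and every vertex of $W:=\bigcup_\ell W_\ell$ is \emph{well-connected}, missing fewer than $n^{1/r}g(n)=o(n)$ vertices in each other part and having fewer than that many neighbours inside its own part.

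Next I would eliminate all within-part edges among well-connected vertices, and this is where the color-critical edge enters. Let $e_0=xy$ be a color-critical edge of $F$ and fix a proper $r$-colouring of $F-e_0$; since $\chi(F)=r+1$ the endpoints $x,y$ receive the same colour, so the colour classes $C_1\ni x,y$, $C_2,\dots,C_r$ of $F-e_0$ are independent in $F$ except for the single edge $e_0$. If some part $W_\ell$ contained an edge $zz'$, I would embed $F$ into $G$ by sending $x\mapsto z$, $y\mapsto z'$ (realising $e_0$) and then greedily placing $C_1\setminus\{x,y\}$ into $W_\ell$ and each $C_m$ $(m\ge 2)$ into a distinct remaining part $W_{\ell_m}$: at every step the new vertex need only be adjacent to the constantly many already-placed images of its $F$-neighbours, and since each well-connected vertex misses $o(n)$ vertices in each other part while each part has $\Omega(n)$ vertices, the greedy choice always succeeds (non-edges of $F$ impose no constraint as we only need a subgraph copy). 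This produces $F\subseteq G$, a contradiction, so every $W_\ell$ is independent.

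It then remains to delete a further $o(n)$ vertices from $W$ so that all edges between the surviving parts are present; equivalently, letting $H$ be the graph on $W$ whose edges are the between-part non-edges of $G$, one must show $H$ has a vertex cover of size $o(n)$. The structural input from saturation is: if $uv\in E(H)$ with $u\in W_i$, $v\in W_j$, $i\ne j$, then $G+uv$ contains a copy $\phi$ of $F$ using $uv$, say with an edge $f\in E(F)$ sent to $\{u,v\}$; if this copy avoided $Z$ then, since by the previous step every edge of $G$ inside $\phi(F)$ joins two distinct parts, the ``part of $\phi(a)$'' map would be a proper $r$-colouring of $F-f$ giving different colours to the endpoints of $f$, hence a proper $r$-colouring of $F$ --- impossible. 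Thus every $H$-edge has a witnessing copy of $F$ meeting the $o(n)$ vertices of $Z$. The plan is to convert this, together with the budget $N=o(n^{(r+1)/r})$ and the near-regularity of well-connected vertices, into the statement that the $H$-edges concentrate on $o(n)$ vertices --- those concentration points being forced into a quantitative violation of one of the ``well-connected'' conditions, so that with the thresholds of the first stage chosen correctly they already lie in $Z$. Deleting a minimum vertex cover of $H$ from $W$ then leaves an induced complete $r$-partite subgraph on $(1-o(1))n$ vertices.

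The main obstacle is exactly this last step: the part-colouring observation shows where $F$-saturation is used and rules out ``free'' between-part non-edges, but passing from ``every bad non-edge is witnessed through the $o(n)$ exceptional vertices'' to ``the bad non-edges admit an $o(n)$ vertex cover'' needs a genuinely quantitative argument (a single exceptional vertex can a priori witness up to $\Theta(n^2)$ non-edges, so the witness map alone is too lossy). This is also precisely where the exponent $(r+1)/r$ is sharp: by Theorem~\ref{posasn} there are $K_{r+1}$-saturated graphs with $t_r(n)-\Omega(n^{(r+1)/r})$ edges in which the analogous graph $H$ has vertex cover $\Theta(n)$, so any proof must extract from the $o(\cdot)$ in the hypothesis a corresponding $o(n)$ control on this cover, presumably by an iterative refinement of $Z$ driven by the saturation witnesses.
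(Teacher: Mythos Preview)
This statement is Conjecture~\ref{kissej} in the paper, and the paper does \emph{not} prove it; it is left open, with only the special cases of Theorems~\ref{elkrit} and~\ref{main} established (both for $r=2$ and under extra structural hypotheses on $F$). So there is no ``paper's own proof'' to compare against, and your proposal should be read as an attack on an open problem.

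Your outline has two gaps. The minor one is in Step~2: Erd\H os--Simonovits stability alone gives only $W=o(n^2)$, and the edge identity yields $N-W=o(n^{(r+1)/r})$ (modulo imbalance), not $W,N=o(n^{(r+1)/r})$ separately. This is exactly what Lemma~\ref{harmad} supplies (via the Andr\'asfai--Erd\H os--S\'os/Simonovits minimum-degree results for color-critical $F$), and with it your $Z$ and your Step~4 embedding do go through; so this is fixable, just not ``standard''. The major gap is the one you yourself flag in Step~5: you have no argument that the between-part non-edge graph $H$ has an $o(n)$ vertex cover, and this is precisely why the conjecture is open. The paper's proofs of the special cases do \emph{not} bound a vertex cover of $H$ abstractly; instead they use the specific shape of $F$ to show directly that any non-edge $uv$ has an endpoint in an explicitly constructed $o(n)$ set built from the neighbourhoods of the exceptional vertices $T$. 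For Theorem~\ref{elkrit} this works because every edge of $F$ touches a triangle, so every saturation witness through $uv$ contains a triangle, which must meet $T$ (the remaining graph being bipartite), forcing $u$ or $v$ into sets of the form $U(v),U'(v),U''(xy)$ of total size $o(n)$; for Theorem~\ref{main} a decomposition of $F$ into connected pieces indexed by its universal vertex plays the analogous role. Your plan does not provide any such $F$-specific mechanism for general color-critical $F$, and without one the ``witness through $Z$'' observation is, as you note, too lossy to conclude.
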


We remark that in this case $G'$ has to be an induced subgraph. 
$K_r$ and $r$-chromatic graphs with a color-critical edge often behave similarly in extremal questions, see e.g. \cite{robscott} for several stability results. Another reason to assume that this conjecture might hold, and it might be easier to prove this than Conjecture \ref{nagysej} is the following.
The proofs of Theorems \ref{posasn} and \ref{kinai} both start with finding a not necessarily complete $r$-partite graph with many vertices and edges, using only the $F$-free property. After that, both  proofs continue with showing that $o(n)$ vertices are incident to all the missing edges between the two partite sets, thus removing those vertices finishes the proof.

The first step of this proof holds for every graph in this generality, but we need a very delicate version that fully uses the property of having $t_r(n)-o(n^{(r+1)/r}$ edges. Fortunately, the version due to Popielarz, Sahasrabudhe and Snyder (Lemma 2.3 in \cite{pss}) easily extends to graphs with a color-critical edge. The version in \cite{pss} uses a result of Andr\'asfai, Erd\H os and S\'os \cite{aes} that determined the largest possible minimum degree in an $n$-vertex $K_{r+1}$-free graph that is not $r$-partite, and a result of
Brouwer \cite{br} that determined the largest possible minimum number of edges in such graphs.

 Erd\H os and Simonovits \cite{ersim} extended the result of Andr\'asfai, Erd\H os and S\'os, while Simonovits \cite{miki} extended the result of Brouwer asymptotically to any $r$-chromatic graph with a color-critical edge in place of $K_r$. 
 Using those results instead, the lemma below easily follows by the same proof as Lemma 2.3 in \cite{pss}.

\begin{lemma}\label{harmad} Let $r\ge 2$ and $F$ be an $(r+1)$-chromatic graph with a critical edge.
Then there is a constant $d_F$, depending only on $F$, such that the following holds. If $0< \alpha$ is small enough, $n$ is large enough, and $G$ is an $n$-vertex $F$-free graph with $|E(G)|\ge t_{r}(n)-\alpha n^2$, then there is a subset $T\subset V(G)$ with $|T|\le d_F\alpha n$ such that $G-T$ is $(r-1)$-partite.
\end{lemma}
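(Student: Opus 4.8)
The plan is to follow the structure of Lemma 2.3 in \cite{pss}, replacing each clique-specific ingredient by its analogue for $(r+1)$-chromatic graphs with a color-critical edge. First I would set up the dichotomy: either $G$ is $r$-partite, in which case $T=\emptyset$ works (an $r$-partite graph is in particular obtained by removing nothing and is $(r-1)$-partite only after removing one part — so one actually keeps one part aside; let me be careful here and instead proceed as follows). Since $G$ is $F$-free and $F$ is $(r+1)$-chromatic, the Erd\H os--Stone--Simonovits bound together with $|E(G)|\ge t_r(n)-\alpha n^2$ tells us $G$ is ``close'' to $T_r(n)$; the goal is to locate a small set $T$ whose removal drops the chromatic number of the host to $r-1$, equivalently makes $G-T$ embeddable into $T_{r-1}$.

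The key device is an iterative cleaning argument. Let $\delta_F(n)$ denote the maximum over $n$-vertex $F$-free graphs that are \emph{not} $r$-partite of the minimum degree; by the Erd\H os--Simonovits extension \cite{ersim} of Andr\'asfai--Erd\H os--S\'os, $\delta_F(n)\le (1-\tfrac{1}{r}-c_F)n$ for some constant $c_F>0$. Dually, by Simonovits' extension \cite{miki} of Brouwer's theorem, an $n$-vertex $F$-free graph that is not $r$-partite has at most $t_r(n)-e_F(n)$ edges where $e_F(n)=\Theta(n)$ — more precisely, removing it from $r$-partiteness costs linearly many edges. I would use the minimum-degree version: repeatedly remove a vertex of degree at most $(1-\tfrac1r-c_F)n$ from the current graph. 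Each such removal deletes at most that many edges; after removing $t$ vertices the graph has at least $t_r(n)-\alpha n^2 - t(1-\tfrac1r-c_F)n - \binom{t}{2}$ edges on $n-t$ vertices. Comparing with $t_r(n-t)=t_r(n) - (1-\tfrac1r)tn + O(t^2)$, the surplus forces the process to terminate after $t = O(\alpha n / c_F)$ steps (this is where the $d_F$ comes from, $d_F \approx 1/c_F$ up to constants): once it terminates, the remaining graph $G'$ either is empty or has minimum degree exceeding $(1-\tfrac1r-c_F)n$, hence by the Erd\H os--Simonovits bound $G'$ is $r$-partite. So $G-T_0$ is $r$-partite with $|T_0|\le d_F'\alpha n$.

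Now $G-T_0$ being $r$-partite is not yet $(r-1)$-partite; I need to shave off one more part. But here is the point: an $r$-partite graph has a part of size at most $(n-|T_0|)/r$, and I would like to claim one can find a part of size $O(\alpha n)$ to add to $T$. This is \emph{not} true for a general $r$-partite graph, so the edge-count must be used again: since $|E(G)|\ge t_r(n)-\alpha n^2$ and $G-T_0$ is $r$-partite on $n-|T_0|$ vertices, the part sizes $n_1\ge\cdots\ge n_r$ satisfy $\sum_{i<j} n_i n_j \ge t_r(n) - O(\alpha n^2)$, which by convexity forces each $n_i = n/r + O(\sqrt{\alpha}\, n)$. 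Hmm — this only gives $O(\sqrt\alpha n)$, not $O(\alpha n)$. So I expect the main obstacle to be exactly this last step: getting the \emph{linear-in-$\alpha$} (not $\sqrt\alpha$) bound on the size of the removed set.

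The resolution, and the reason the lemma is stated with $(r-1)$-partite rather than demanding balanced parts, is that we do \emph{not} need a small part — we only need $G-T$ to be $(r-1)$-\emph{colorable}. I would instead argue as follows: take the smallest part $P_r$ of $G-T_0$; the vertices of $P_r$ have all their neighbors in $P_1\cup\cdots\cup P_{r-1}$. For each such vertex $v$, if $v$ has no neighbor in some $P_i$ ($i<r$), we may recolor $v$ into class $i$, deleting it from $P_r$. The vertices that \emph{cannot} be recolored are those adjacent to all $r-1$ other classes; call this set $S\subseteq P_r$. Then $G-(T_0\cup S')$ is $(r-1)$-partite, where $S'$ is $S$ minus the recolorable vertices — wait, $S$ itself is the obstruction, so $G-(T_0\cup S)$ is $(r-1)$-partite. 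It remains to bound $|S|$. Each $v\in S$ has degree $\ge r-1$ into the other classes, but more usefully: the bipartite-type density argument shows that if $|S|$ were $\gg \alpha n$, the edges within $G[S\cup P_1\cup\cdots\cup P_{r-1}]$ plus the missing edges elsewhere would contradict $|E(G)|\ge t_r(n)-\alpha n^2$, because vertices in $S$ each force $\Omega(n)$ ``missing'' edges somewhere in the Turán-graph comparison (a vertex adjacent to all $r-1$ parts but lying in the $r$-th part is genuinely a defect of size $\Omega(n)$ in the edge count only if... ). Honestly, I would lean on the fact that the combined statement — ``$G-T$ is $(r-1)$-partite for $|T|=O(\alpha n)$'' — is precisely what the minimum-degree iteration above yields if I run it one level deeper: a graph that is $F$-free, has $\ge t_r(n)-\alpha n^2$ edges, and is \emph{not} coverable by $T$ with $G-T$ being $(r-1)$-partite would have to retain linearly-many ``excess'' edges over $t_{r-1}(n-|T|)$ at each stage, and Simonovits' linear edge-defect result \cite{miki} bounds how long that can persist. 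Packaging both levels into one iteration, with $d_F$ the worst of the two constants, gives the claim. The single genuinely delicate point, as in \cite{pss}, is verifying that the per-step edge loss in the iteration is strictly bounded away from the per-step drop in the Turán number, uniformly, so that the number of steps is $O(\alpha n)$ and not merely $o(n)$; this is exactly where the \emph{quantitative} (linear) forms of the Erd\H os--Simonovits and Simonovits theorems, rather than their qualitative versions, are indispensable.
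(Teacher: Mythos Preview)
The lemma as printed contains a typo: the conclusion should read ``$G-T$ is $r$-partite'', not ``$(r-1)$-partite''. Indeed, the Tur\'an graph $T_r(n)$ itself is $F$-free with $t_r(n)$ edges (so $\alpha$ may be taken arbitrarily small), yet one must delete an entire part of size $\lfloor n/r\rfloor$ to make it $(r-1)$-partite; this violates any bound of the form $|T|\le d_F\alpha n$. The surrounding discussion in the paper confirms the intended conclusion: the cited results of Andr\'asfai--Erd\H os--S\'os and Brouwer concern $K_{r+1}$-free graphs that are not $r$-partite, and the paper's own Lemma~\ref{harmad2} (the $r=2$ analogue actually used later) produces a \emph{bipartite} subgraph.

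Once the typo is corrected, your first block --- the iterative deletion of a vertex of degree at most $(1-\tfrac1r-c_F)n_i$, using the Erd\H os--Simonovits extension of Andr\'asfai--Erd\H os--S\'os, together with the Simonovits edge-count extension of Brouwer to bound the number of steps by $O(\alpha n/c_F)$ --- is exactly the argument the paper intends (it says the proof is the same as Lemma~2.3 of \cite{pss} with those two substitutions, and then omits it). That part of your proposal is correct and essentially complete.

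Everything after ``Now $G-T_0$ being $r$-partite is not yet $(r-1)$-partite'' is you wrestling with the typo. Your instinct that the convexity argument only yields $O(\sqrt{\alpha}\,n)$, and that the recoloring of $P_r$ does not obviously leave only $O(\alpha n)$ stubborn vertices, is sound: those obstructions are real, because the target statement is false. Drop that entire second half; the proof ends once $G-T_0$ is $r$-partite with $|T_0|\le d_F\alpha n$.
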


We omit the proof of this lemma. We will prove Conjecture \ref{kissej} for some 3-chromatic graphs with a color-critical edge, but we will use another lemma instead, making this paper self-contained.

\begin{thm}\label{elkrit}
Let $F$ be a 3-chromatic graph with a color-critical edge such that every edge has a vertex that is contained in a triangle. Then Conjecture \ref{nagysej} holds for $F$.
\end{thm}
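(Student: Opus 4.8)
The plan is to follow the two-step strategy behind Theorems \ref{posasn} and \ref{kinai}, replacing the clique/cycle-specific arguments by ones that exploit the hypotheses on $F$. Let $G$ be our $F$-saturated graph on $n$ vertices and put $N:=t_2(n)-|E(G)|=o(n^{3/2})$. Since $F$ has a colour-critical edge, $\ex(n,F)=t_2(n)$ for large $n$, so $N\ge 0$, and the quantitative facts about almost-extremal $F$-free graphs are available: the Erd\H os--Simonovits extension \cite{ersim} of the Andr\'asfai--Erd\H os--S\'os theorem (large minimum degree forces bipartiteness) and Simonovits's extension \cite{miki} of Brouwer's edge bound.

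\emph{Step 1 (near-bipartite partition, from $F$-freeness only).} First I would produce a partition $V(G)=A\sqcup B\sqcup W$ with $A,B$ independent in $G$, $|W|=o(\sqrt n)$, and only $o(n^{3/2})$ non-edges of $G$ between $A$ and $B$. This is Lemma \ref{harmad} with $r=2$ (there $\alpha n^2=N$, so $|W|=|T|\le d_F\alpha n=o(\sqrt n)$ and $G-W$ is bipartite); to keep the argument self-contained one may instead repeatedly delete a vertex whose current degree is below the Erd\H os--Simonovits threshold, the point being that $|E(G)|\ge t_2(n)-N$ forces termination after $o(\sqrt n)$ deletions — this is where the exponent $3/2$ is used. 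The non-edge count follows from $|A||B|\le t_2(|A|+|B|)\le t_2(n)$ together with $e_G(A,B)=|E(G-W)|\ge |E(G)|-|W|n=t_2(n)-o(n^{3/2})$.

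\emph{Step 2 (saturation removes the non-edges).} It suffices to find $Z\subseteq V(G)$ with $|Z|=o(n)$ meeting every non-edge of $G$ between $A$ and $B$: the complete bipartite graph between $A\setminus Z$ and $B\setminus Z$ is then the desired (induced) complete bipartite subgraph on $(1-o(1))n$ vertices. The relevant structure of $F$ is elementary: since $F-e_0$ is bipartite for the colour-critical edge $e_0=xy$, every triangle of $F$ contains $e_0$, so the triangles are exactly $\{x,y,z_j\}$, where $z_1,\dots,z_k$ ($k\ge 1$, as some edge lies in a triangle) are the common neighbours of $x,y$; thus $F$ is a book with spine $xy$ and pages $z_1,\dots,z_k$ (pairwise non-adjacent), together with an independent set of further vertices each joined only to $x$, only to $y$, or to a nonempty set of $z_j$'s. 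Now fix a non-edge $ab$ with $a\in A$, $b\in B$; by saturation $G+ab$ contains a copy $\phi$ of $F$ using $ab$, say $a=\phi(u_a)$, $b=\phi(u_b)$ with $u_au_b\in E(F)$. By hypothesis $u_a$ or $u_b$ — say $u_a$ — lies in a triangle $t^*=\{x,y,z_j\}$; its image $\phi(t^*)$ is a triangle of $G+ab$ containing $a$, and since $A,B$ remain independent in $G+ab$, a triangle has at most one vertex in each, so $\phi(t^*)$ has a vertex $c\in W$. Chasing the third vertex: either $\phi(t^*)=\{a,b,c\}$, so $c$ is a common $G$-neighbour of $a$ and $b$; or $\phi(t^*)$ is a triangle of $G$ through $a$, so $a$ has a neighbour $c\in W$. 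The symmetric conclusion holds if $u_b$ is in a triangle instead.

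\emph{From witnesses to the cover; the main obstacle.} Let $Z_0$ be $W$ together with all vertices of $A\cup B$ having at least $\sqrt n$ non-neighbours on the other side; then $|Z_0|=o(n)$. Consider a non-edge $ab$ disjoint from $Z_0$, so $a,b$ are ``typical'' (adjacent to all but $<\sqrt n$ vertices of the other side), with a witness $c\in W$ as above. If $c$ has at least $\sqrt n$ neighbours in $A$ (the case of $\ge\sqrt n$ neighbours in $B$ being symmetric), I would embed $F$ into $G$ directly: send $x,y$ to $b,c$, send $z_1,\dots,z_k$ to common neighbours of $b$ and $c$ in $A$ (plenty, and pairwise non-adjacent and typical), and send each remaining vertex of $F$ to a fresh neighbour of $b$, a fresh neighbour of $c$ (here $|N(c)\cap A|\ge\sqrt n$ is used), or a fresh common neighbour in $B$ of the relevant $z_j$-images — each available in abundance. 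This copy contradicts $F$-freeness, so $c$ has $<\sqrt n$ neighbours on each side, and enlarging $Z$ by $\bigcup\{\,N(c)\cap(A\cup B):c\in W,\ |N(c)\cap A|,|N(c)\cap B|<\sqrt n\,\}$, of size $o(\sqrt n)\cdot\sqrt n=o(n)$, captures the endpoint of $ab$ lying in $N(c)$; in the ``$a$ has a $W$-neighbour $c$'' branch one argues identically with a book on $\{a,c\}$ and pages in $B$, concluding that $a\in Z_0$ or $c$ is light. The heart of the argument — and the step I expect to be the real obstacle — is exactly this embedding: one must use that \emph{every} edge of $F$ lies in a triangle in order to place the vertices of $F$ outside any triangle into the book-type configuration around $c$, and one must carefully manage the supply of typical vertices (sweeping the atypical ones, and the $W$-vertices with many neighbours on both sides, into $Z_0$ first, and reclassifying $W$-vertices that behave like $A$- or $B$-vertices). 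For an $F$ with an edge in no triangle, such as $F=C_5$, no witness $c$ need exist and the scheme collapses, which is precisely why the theorem is stated only for this class.
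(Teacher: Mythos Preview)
Your Step~1 and the extraction of a triangle witness $c\in W$ are correct and match the paper; your structural description of $F$ (all triangles share the critical edge, so $F$ is a book on $xy$ with pendants attached to $x$, to $y$, or to pages) is also correct, though the paper makes do with the cruder observation $F\subseteq K_{m,m+2}+\text{(edge)}$. The genuine gap is in your light/heavy dichotomy for $c$ in the branch where the witness triangle is $\{a,c,d\}\subseteq G$ (Case~2, not containing $b$). There you build a book on $\{a,c\}$ with pages in $B$, which needs $|N(c)\cap B|$ large. But nothing rules out $|N(c)\cap A|\ge\sqrt n$ while $|N(c)\cap B|<\sqrt n$: then your embedding is unavailable, $c$ is not ``light'' by your definition, and $a\in N(c)\cap A$ sits on the \emph{large} side of $N(c)$, so it is not swept into $Z$. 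Your suggested reclassification does not close this: moving such a $c$ into $B$ forces you to delete $N(c)\cap B$, which contains $d$ but not $a$, and the non-edge $ab$ remains uncovered. (There is also a minor threshold issue --- with equal thresholds the guaranteed size of $N(b)\cap N(c)\cap A$ is only $\ge 0$, and you cannot force the pages to be typical since the atypical set has size $o(n)\gg\sqrt n$ --- but that part is repairable using the minimum-degree condition in place of typicality.)

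The paper's Step~2 avoids the dichotomy. For each $c\in W$ set $U(c)$ to be the \emph{smaller} of $N(c)\cap A$ and $N(c)\cap B$; the bipartite graph between the two neighbourhoods is $(F{-}x)$-free (else $c$ completes a copy of $F$), hence misses $\Omega(|U(c)|^2)$ edges, and Cauchy--Schwarz together with $|W|=o(\sqrt n)$ and the $o(n^{3/2})$ missing-edge budget gives $\sum_c|U(c)|=o(n)$, so $U_0:=\bigcup_c U(c)$ is small. Next one shows that any $u\in N(c)\cap A$ has fewer than $|V(F)|$ neighbours in $N(c)\cap B$ (otherwise these, their many common neighbours in $A$ from the minimum-degree bound, together with $u,c$ form $K_{m,m+2}$ plus an edge, which contains $F$); hence the set $U'$ of vertices that are common $G$-neighbours of some $c\in W$ and some $d\in U(c)$ has $|U'|\le|V(F)|\cdot|U_0|=o(n)$. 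Similarly the common neighbours in $A\cup B$ of edges inside $W$ form $U''$ with $|U''|=O(|W|^2)=o(n)$. Deleting $U_0\cup U'\cup U''$ leaves a complete bipartite graph: in your problematic case one has $d\in U(c)\subset U_0$ (if $d\in B$) and then $a\in U'$, or $d\in W$ and then $a\in U''$. The Cauchy--Schwarz control of $\sum_c|U(c)|$ is exactly the quantitative input that a uniform $\sqrt n$ threshold cannot supply.
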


Let us state a third conjecture, which is implied by the first and implies the second.

\begin{conjecture}\label{kozsej}
Let $r\ge 2$ be an integer and $F$ be a graph with chromatic number $r+1$ and a color-critical vertex. Then every $F$-saturated graph $G$ on $n$ vertices with $t_r(n)-o(n^{\frac{r+1}{r}})$ edges contains a complete $r$-partite subgraph on $(1-o(1))n$ vertices.
\end{conjecture}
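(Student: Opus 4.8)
We outline a possible approach to Conjecture~\ref{kozsej}, modelled on the proofs of Theorems~\ref{posasn} and~\ref{kinai}. Let $F$ be an $(r+1)$-chromatic graph with a color-critical vertex $v$, so that $F-v$ is $r$-partite, and let $G$ be an $n$-vertex $F$-saturated graph with $|E(G)|\ge t_r(n)-o(n^{(r+1)/r})$; in particular $|E(G)|\ge t_r(n)-o(n^2)$.

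\emph{Step 1: a near-optimal partition and reduction to a counting statement.} Since $\chi(F)=r+1$, the Erd\H os-Simonovits stability theorem applies (it uses nothing beyond the chromatic number) and yields a partition $V(G)=V_1\cup\dots\cup V_r$ differing from a Tur\'an partition by $o(n^2)$ edge edits. Granting a color-critical-vertex analogue of Lemma~\ref{harmad}, there is moreover a set $T$ with $|T|=o(n^{1/r})$ such that $G-T$ has no edge inside a class; hence the number $m_{\mathrm{in}}$ of edges of $G$ inside the classes and the number $m_{\mathrm{out}}$ of non-edges of $G$ between classes are both $o(n^{(r+1)/r})$. Counting edges, $m_{\mathrm{out}}-m_{\mathrm{in}}=\sum_{i<j}|V_i||V_j|-|E(G)|\le t_r(n)-|E(G)|=o(n^{(r+1)/r})$, and the same inequality forces the partition to be near-balanced, $|V_i|=(1/r+o(1))n$. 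Call $u\in V_i$ \emph{bad} if it has a non-neighbour in some $V_j$ with $j\ne i$, and let $B$ be the set of bad vertices. It suffices to prove $|B|=o(n)$: on the vertex set $V(G)\setminus B$ keep only the edges running between distinct classes; this is a subgraph of $G$, since all such pairs are edges (their endpoints are not bad), it is complete $r$-partite, and it has $n-|B|=(1-o(1))n$ vertices. In accordance with the remark after Conjecture~\ref{nagysej}, this subgraph need not be induced: $G$ may still have a few edges inside the classes, as already happens for the ``bowtie'' of two triangles sharing a vertex, for which $T_2(n)$ plus any single edge is $F$-free.

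\emph{Step 2: bounding the number of bad vertices.} Suppose for contradiction that $|B|\ge\varepsilon n$ for a fixed $\varepsilon>0$. For each bad $u\in V_i$ fix a non-neighbour $w_u\in V_j$, $j\ne i$; by $F$-saturation $G+uw_u$ contains a copy $F_u$ of $F$ through the edge $uw_u$. Once the $o(n^{(r+1)/r})$ internal edges of $G$ are removed the graph is $r$-partite and so contains no graph of chromatic number $r+1$; hence $F_u$ must use internal edges of $G$, and it uses enough of them, together with the cross edge $uw_u$, to be non-$r$-colourable through the partition. Here the color-critical vertex is the crucial tool: choosing a proper $(r+1)$-colouring of $F$ adapted to the neighbourhood of $v$, one shows that $F_u$ may be taken so that all of its ``misplaced'' vertices lie in a bounded gadget attached to $u$ and $w_u$ and built from a few internal edges of $G$ together with many cross edges between the large classes --- this is the part played by the counting lemmas of \cite{pss} that follow their Lemma~2.3. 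A double count over the at least $\varepsilon n$ such gadgets then shows that $o(n^{(r+1)/r})$ internal edges cannot service them all without either producing a genuine copy of $F$ inside $G$, contradicting $F$-freeness, or forcing $m_{\mathrm{out}}-m_{\mathrm{in}}=\Omega(n^{(r+1)/r})$, contradicting Step 1. Hence $|B|=o(n)$, and the conjecture follows.

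\emph{The main obstacle.} Everything rests on Step 2, and specifically on recovering the exact exponent $n^{(r+1)/r}$ with only a color-critical vertex available. When $F$ has a color-critical \emph{edge}, a single misplaced edge in the right position already creates a copy of $F$, so each bad vertex cheaply forces a forbidden local configuration and the counting is essentially that of \cite{pss}; with only a color-critical vertex the cross edge $uw_u$ must sit inside a $K_r$-like or odd-cycle-like gadget of $F$, so a bad vertex ``costs'' several internal edges, and controlling how internal edges are shared between different gadgets --- while still reaching the threshold $n^{(r+1)/r}$ rather than a weaker bound --- is where the real work lies. A secondary obstacle is already present in Step 1: the color-critical-vertex analogue of Lemma~\ref{harmad} has to be established, but here $\ex(n,F)$ need not equal $t_r(n)$ (it is $t_2(n)+1$ for the bowtie), so the known proof --- which relies on the exact equality $\ex(n,F)=t_r(n)$ for color-critical-edge $F$ together with the Brouwer-type bound of Simonovits \cite{miki} --- does not transfer directly, and one would need the corresponding (necessarily softer) extensions of the theorems of Andr\'asfai, Erd\H os and S\'os \cite{aes} and of Brouwer \cite{br}.
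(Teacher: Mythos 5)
There is a genuine gap, and you have in fact named it yourself: Step 2 is not an argument but a description of what an argument would have to accomplish. The statement you are addressing is stated in the paper as a \emph{conjecture} and is left open there; the paper only proves it in two special cases (Theorem \ref{main}: $r=2$ with the color-critical vertex adjacent to all other vertices of $F$, and Theorem \ref{elkrit}: certain 3-chromatic graphs with a color-critical edge). Your double count over ``gadgets'' attached to the pairs $uw_u$ is exactly the missing heart of the matter: with only a color-critical vertex, a copy of $F$ created by adding $uw_u$ can route its non-$r$-partite part through internal edges shared among many bad vertices, and nothing in your sketch controls this sharing or shows that the cost per bad vertex accumulates to $\Omega(n^{(r+1)/r})$ missing cross edges or a genuine copy of $F$. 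Moreover, Step 1 already ``grants'' a color-critical-vertex analogue of Lemma \ref{harmad} with $|T|=o(n^{1/r})$; this is also unproven (the paper's Lemma \ref{harmad} is stated only for a color-critical edge, and its proof leans on the Andr\'asfai--Erd\H os--S\'os / Brouwer--Simonovits results for that setting, where $\ex(n,F)=t_r(n)$ exactly). So the proposal is a reasonable program, not a proof.

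It is also worth noting that where the paper \emph{can} prove something (Theorem \ref{main}), it does not follow your ``bound the bad vertices by a gadget double count'' scheme. Instead it uses Lemma \ref{harmad2} (a direct stability lemma for 3-chromatic $F$ with a color-critical vertex, needing only an $\alpha n^2$ deficit and giving a bipartite $H$ in which every vertex has at most $|V(F)|$ neighbours inside its own class), removes a set $U$ built from (i) the smaller neighbourhoods $U(v)$ of the $o(n^{1/2})$ deleted low-degree vertices, controlled via the K\H ov\'ari--S\'os--Tur\'an theorem and Cauchy--Schwarz, and (ii) the vertex sets of \emph{rare} copies of connected pieces $F_0$ and $Q$ of $F$ inside the classes, where rarity is forced by a counting argument showing that many such copies on both sides would create $\Omega(n^{3/2})$ missing cross edges. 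Only then does it verify that every missing pair in the remaining bipartite graph would yield a copy of $F$ meeting one of the deleted sets. This decomposition of $F$ into $F_0$, $F_1$, $Q$ uses crucially that the critical vertex dominates $F$ and that $r=2$, which is precisely why the general conjecture, and your Step 2, remain open.
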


A reason to assume that this conjecture might hold is that we are able prove it in the special case $r=2$ and the color-critical vertex is connected to every other vertex of $F$. 

\begin{thm}\label{main}
Let $F$ be a 3-partite graph with a vertex $w$ that is connected to every other vertex of $F$. Then Conjecture \ref{nagysej} holds for $F$.
\end{thm}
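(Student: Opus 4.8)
The plan is to follow the two‑step strategy behind Theorems~\ref{posasn} and~\ref{kinai}: first extract from $G$ a near‑complete bipartite partition using only the $F$‑free property and the edge count, and then use $F$‑saturation together with the dominating vertex $w$ to show that deleting $o(n)$ vertices leaves a complete bipartite subgraph. Begin with the structural consequences of the hypothesis. Writing $H:=F-w$, the fact that $w$ is adjacent to all other vertices gives $\chi(H)=\chi(F)-1\le 2$, so $H$ is bipartite; moreover $H$ has an edge (otherwise $F$ is a star and the statement is immediate). Fixing a bipartition of $H$ with parts of sizes $a\le b$ (with $a\ge 1$ and $a+b=|V(F)|-1$), we get $F\subseteq K_1+K_{a,b}$, i.e.\ $F$ embeds into $K_{a,b}$ together with one extra vertex joined to all of it; consequently, if a graph is $F$‑free then the neighbourhood of every vertex is $K_{a,b}$‑free. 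Since we only need a complete bipartite \emph{subgraph}, not an induced one (see the remark after Conjecture~\ref{nagysej}), it suffices to produce disjoint $S,T\subseteq V(G)$ with $|S|+|T|\ge(1-o(1))n$ and $st\in E(G)$ for all $s\in S$, $t\in T$; edges inside $S$ or inside $T$ are irrelevant.

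For the first step I would invoke the self‑contained structural lemma (the analogue of Lemma~\ref{harmad} for the present class, using the $K_{a,b}$‑freeness of neighbourhoods): writing $|E(G)|\ge t_2(n)-\alpha n^2$ with $\alpha n^2=o(n^{3/2})$, it yields a partition $V(G)=A\cup B$ in which the number of edges with both ends in $A$ or both in $B$ is $o(n^{3/2})$, and I would further take the partition to be a maximum cut. Since $|A|\,|B|\le t_2(n)$ while the number of cross‑edges is $|E(G)|-o(n^{3/2})\ge t_2(n)-o(n^{3/2})$, the number $m$ of non‑edges between $A$ and $B$ is $|A|\,|B|-e_G(A,B)=o(n^{3/2})$ and $|A|,|B|=n/2+o(n)$. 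Call an edge inside a part, or a non‑edge between the parts, a \emph{defect}; the total number of defects is $o(n^{3/2})$, so the set $D$ of vertices incident to at least $\varepsilon n$ defects (for a small fixed $\varepsilon>0$) has size $|D|\le o(n^{3/2})/(\varepsilon n)=o(\sqrt n)$. Put $S=A\setminus D$ and $T=B\setminus D$; then $|S|+|T|\ge n-|D|=(1-o(1))n$, and it remains to prove the main claim: every $u\in S$, $v\in T$ satisfies $uv\in E(G)$.

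Suppose not, and fix $u\in S\subseteq A$, $v\in T\subseteq B$ with $uv\notin E(G)$; then $u$ has fewer than $\varepsilon n$ non‑neighbours in $B$ and fewer than $\varepsilon n$ neighbours in $A$, and symmetrically for $v$. Since $G$ is $F$‑saturated, $G+uv$ contains a copy $F'$ of $F$ through the edge $uv$, which I would choose so as to use as few vertices of $D$ as possible. Let $w'$ be the vertex of $F'$ in the role of $w$, so $w'$ is complete to the other $|V(F)|-1$ vertices of $F'$ in $G+uv$, and $F'-w'\cong H$ is bipartite. Because the bipartition $(A,B)$ of $G+uv$ is violated only by the $o(n^{3/2})$ edges inside the parts, and because $u,v$ reach all but $\varepsilon n$ of the opposite side, the aim is to modify $F'$ into a copy of $F$ lying entirely in $G$, contradicting $F$‑freeness. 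Concretely, any vertex of the copy lying in $A$ (resp.\ $B$) that must be adjacent to a bounded set of vertices across the partition can be re‑chosen among the $\ge n/2-O(\varepsilon n)$ common cross‑neighbours of that set; and the bad edge $uv$ is removed either by swapping $v$ for a genuine neighbour of $u$ in $B$ with the right adjacencies, or, when $w'\in\{u,v\}$ (say $w'=u$), by realising $F\subseteq K_1+K_{a,b}$ with apex $u$, using $a$ internal neighbours of $u$ as the small side (these exist unless $u$ has fewer than $a$ internal neighbours, a case one treats via $v$ or via a third vertex with many internal neighbours). Carrying this through gives $K_1+K_{a,b}\subseteq G$, hence $F\subseteq G$, the desired contradiction.

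I expect the main obstacle to be exactly the rerouting above when the saturation copy $F'$ is pinned to low‑degree vertices — those in $D$, or the exceptional vertices deleted by the structural lemma. Such a vertex can have very few neighbours across the partition, so it cannot be freely replaced, and one must rule out that a non‑bipartite copy of $F$ through $uv$ is forced to pass through it. Handling this should use both the extremal choice of $F'$ (minimising $|V(F')\cap D|$, so that if some vertex of $F'\cap D$ were replaceable one would contradict minimality, which in turn restricts the common neighbourhoods of its $F'$‑neighbours) and the maximum‑cut choice of the partition (forcing even exceptional vertices to send at least half their edges across), together with a careful check that at every replacement step there remain at least $a$, respectively $b$, admissible vertices for each side of the $K_{a,b}$; the sub‑cases $w'\notin\{u,v\}$, $w'=u$ with $u$ having $\ge a$ internal neighbours, and $w'=u$ with $u$ having $<a$ internal neighbours form the technical heart. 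Once the main claim holds, $S$ and $T$ span a complete bipartite subgraph on $|S|+|T|=(1-o(1))n$ vertices, which is the conclusion of Theorem~\ref{main}.
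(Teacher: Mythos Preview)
Your high-level strategy matches the paper's: apply the structural lemma to get a near-complete bipartition, then use saturation to argue that after deleting $o(n)$ vertices the cross-edges are complete. The divergence, and the gap, is in the second step.

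The paper does \emph{not} try to reroute the saturation copy into a copy of $F$ in $G$. Instead it builds a carefully tailored delete set $U$ and shows that any saturation copy $F^\ast$ through a missing cross-edge must hit $U$. The key ingredients are: (i) for each deleted vertex $v\in T$ one puts the \emph{smaller} of $A(v),B(v)$ into $U_0$; this guarantees that the apex $w'$ of $F^\ast$ cannot lie in $T$, since $w'$ is adjacent to both $u$ and $v$ and one of them would then be in $U_0$; the bound $|U_0|=o(n)$ comes from the fact that the bipartite graph between $A(v)$ and $B(v)$ is $(F-w)$-free, so by K\H ov\'ari--S\'os--Tur\'an it is missing $\Omega(|U(v)|^2)$ edges, and these missing edges add up to $o(n^{3/2})$; (ii) for each way of splitting $F$ into two connected pieces $F_0,F_1$ with $w\in F_1$, a counting argument shows one cannot have $n^{3/4}$ copies of $F_0$ inside one part and $n^{3/4}$ extendable copies of the interface $Q$ inside the other (else $\Omega(n^{3/2})$ cross-edges are missing), so the vertices of the rare side go into $U$ as well.

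Your rerouting approach, by contrast, is not completed and it is not clear it can be. The scenario you flag as ``the main obstacle'' is in fact fatal as stated: suppose the apex $w'$ lies in $D$ (nothing prevents this in your setup) and has, say, only a bounded number of neighbours in $B$. To remove the use of the edge $uv$ you must replace $u$ or $v$; replacing $v$ requires a vertex in $N_G(w')\cap B$ with the right $H$-adjacencies, but $N_G(w')\cap B$ may be too small to contain any candidate. Replacing $w'$ instead does not help, because $uv$ is an edge of $H=F-w$, so the copy still needs $uv\in E(G)$. The minimality-of-$|V(F')\cap D|$ trick only tells you that $w'$ itself cannot be swapped for a vertex outside $D$; it says nothing about the replaceability of $v$. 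The max-cut property gives $|N(w')\cap B|\ge |N(w')\cap A|$, but both can be $O(1)$. So the ``technical heart'' you leave to sub-cases is exactly where the argument breaks down; the paper sidesteps it entirely by designing $U$ so that $w'\notin T$ is forced and then analysing where the connected pieces of $F^\ast$ inside $B$ must sit.
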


In our results, we are only interested in the order of magnitude and make no effort to optimize or even precisely state constant factors. We also assume basically everywhere that
$n$ is large enough, which means that there is a constant $n_0$ depending on the parameters introduced earlier such that $n\ge n_0$.

The rest of this paper is organized as follows. In Section 2, we prove some necessary lemmas and some unnecessary lemmas: related results that we do not use later. In Section 3, we present the proof of Theorems \ref{main} and \ref{elkrit}.

\section{Lemmas and other results}
The main reason to assume that Theorem \ref{posasn} can be extended as in Conjecture \ref{nagysej} is that stability results often extend in a similar way. Let us show an example.

\begin{thm}[Nikiforov, Rousseau]\label{nikro}
For $r\ge 3$ there is a constant $d_r$, depending only on $r$, such that the following
holds. For every $0<\alpha \le d_r$, every $K_r$-free $n$-vertex graph $G$
with at least $(\frac{r-2}{2r-2}-\alpha)n^2$ edges
contains an induced $r$-chromatic graph $G'$ of order at least $(1-2\alpha^{1/3})n$ and with
minimum degree
at least $(\frac{r-2}{r-1}-4\alpha^{1/3})n$.
\end{thm}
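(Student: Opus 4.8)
The plan is to extract $G'$ from $G$ by repeatedly deleting vertices of low degree, using Tur\'an's theorem to control how many vertices this costs. Put $c=\frac{r-2}{2r-2}$, so that $2c=\frac{r-2}{r-1}$ and $t_{r-1}(m)\le cm^2$ for every $m$, and set $\beta=\frac{r-2}{r-1}-2\alpha^{1/3}$. Run the following process starting from $G$: while the current graph has a vertex of degree less than $\beta n$, delete one such vertex; let $G'$ be the graph at which the process halts. Then $G'$ is an induced subgraph of $G$, hence $K_r$-free, and every vertex of $G'$ has degree at least $\beta n\ge(\tfrac{r-2}{r-1}-4\alpha^{1/3})n$, which already yields the minimum-degree clause. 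So it remains to show that fewer than $2\alpha^{1/3}n$ vertices are deleted and that $G'$ is $(r-1)$-partite.

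For the vertex count, suppose that at some stage of the process $sn$ vertices have been deleted, leaving a graph $H$ on $(1-s)n$ vertices. Each deleted vertex had degree below $\beta n$ at the moment it was removed, so fewer than $\beta s n^2$ edges have been deleted and hence $e(H)>(\tfrac{r-2}{2r-2}-\alpha)n^2-\beta s n^2$. On the other hand $H$ is $K_r$-free on $(1-s)n$ vertices, so Tur\'an's theorem \cite{T1941} gives $e(H)\le t_{r-1}((1-s)n)\le c(1-s)^2n^2$. Combining and dividing by $n^2$, we find that at every stage of the process
\[
g(s):=c(1-s)^2-(c-\alpha-\beta s)=cs^2-2\alpha^{1/3}s+\alpha>0
\]
(using $2c-\beta=2\alpha^{1/3}$). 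For $\alpha$ small the discriminant $4\alpha^{2/3}-4c\alpha$ is positive, so $g$ has two roots $0<s_-<s_+$; rationalizing the smaller one gives $s_-=\alpha/(\alpha^{1/3}+\sqrt{\alpha^{2/3}-c\alpha})<\alpha^{2/3}$, and $s_+-s_-$ is a positive constant depending only on $r$ and $\alpha$. Since $g<0$ on $(s_-,s_+)$, the fraction of deleted vertices never lies in this interval; it starts at $0<s_-$ and increases by $\tfrac1n$ at each step, so once $n$ is large enough that $\tfrac1n<s_+-s_-$ it can never rise above $s_-$. Hence at most $s_-n<\alpha^{2/3}n\le2\alpha^{1/3}n$ vertices are deleted (in particular $G'$ is nonempty), i.e.\ $|V(G')|\ge(1-2\alpha^{1/3})n$.

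Finally, $\delta(G')\ge\beta n\ge\beta\,|V(G')|$, and $\beta=\tfrac{r-2}{r-1}-2\alpha^{1/3}$ exceeds $\tfrac{3r-7}{3r-4}$ once $\alpha$ is small, because $\tfrac{r-2}{r-1}-\tfrac{3r-7}{3r-4}=\tfrac{1}{(r-1)(3r-4)}>0$. Thus $G'$ is a $K_r$-free graph whose minimum degree lies above the Andr\'asfai--Erd\H os--S\'os threshold \cite{aes}, so $G'$ is $(r-1)$-partite; in particular its chromatic number is at most $r$, as claimed.

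The only genuinely delicate point I foresee is this ``no jump'' step: one must check that the forbidden window $(s_-,s_+)$ has length bounded below by a constant independent of $n$, so that a process raising the deleted fraction in increments of $1/n$ cannot leap over it. The rest is routine bookkeeping with Tur\'an's theorem, elementary analysis of the quadratic $g$, and one invocation of Andr\'asfai--Erd\H os--S\'os; it is worth noting that the true vertex loss is only $O(\alpha^{2/3})n$, comfortably inside the claimed $2\alpha^{1/3}n$, and that no slack in the minimum degree is lost beyond the $2\alpha^{1/3}$ already built into $\beta$.
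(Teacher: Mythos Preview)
The paper does not supply a proof of this theorem; it is quoted as a result of Nikiforov and Rousseau \cite{nikro} and used only as a black box in the proof of Proposition~\ref{elso}. So there is nothing in the paper to compare against.

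That said, your argument is essentially the standard one (and is, in outline, how Nikiforov and Rousseau themselves prove it): iteratively delete low-degree vertices, use Tur\'an's theorem to trap the deleted fraction $s$ outside the interval $[s_-,s_+]$ where the quadratic $g(s)=cs^2-2\alpha^{1/3}s+\alpha$ is nonpositive, and then apply Andr\'asfai--Erd\H os--S\'os to the remainder. Your computations check out: the rationalisation $s_-=\alpha/(\alpha^{1/3}+\sqrt{\alpha^{2/3}-c\alpha})<\alpha^{2/3}$ is correct, the identity $\tfrac{r-2}{r-1}-\tfrac{3r-7}{3r-4}=\tfrac{1}{(r-1)(3r-4)}$ is right, and the AES step is clean because $\delta(G')\ge\beta n\ge\beta\,|V(G')|$.

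Two minor remarks. First, the phrase ``$r$-chromatic'' in the statement is almost certainly a slip for ``$(r-1)$-partite'' (that is what Nikiforov--Rousseau actually prove, and what makes sense for a near-extremal $K_r$-free graph); your proof establishes exactly this, so your closing sentence ``its chromatic number is at most $r$'' undersells what you have shown. Second, your ``no jump'' step does tacitly need $n$ large relative to $\alpha$ so that $1/n<s_+-s_-$; this is consistent with the paper's standing convention that $n$ is always assumed large enough, but the theorem as stated carries no such hypothesis, so it is worth flagging.
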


We can extend the above theorem to any $r$-chromatic graph.

\begin{prop}\label{elso}
For $r\ge 3$ there is a constant $d'_r$, depending only on $r$, such that the following
holds. 
Let $F$ be an $r$-chromatic graph and $n$ be large enough. For every $0<\alpha' \le d_r'$, 
every $F$-free $n$-vertex graph $G$
with at least $(\frac{r-2}{2r-2}-\alpha')n^2$ edges
contains an $r$-chromatic graph $G'$ of order at least $(1-2(\alpha')^{1/3})n$ and with
minimum degree
at least $(\frac{r-2}{r-1}-4(\alpha')^{1/3})n$.
\end{prop}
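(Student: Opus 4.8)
The plan is to deduce Proposition~\ref{elso} from Theorem~\ref{nikro} by a simple ``supersaturation plus cleaning'' argument, exploiting that an $r$-chromatic graph $F$ embeds in $K_r(t)$, the complete $r$-partite graph with parts of size $t$, for $t=|V(F)|$. First I would fix $t=|V(F)|$ and note that if $G$ is $F$-free then $G$ is also $K_r(t)$-free, so in particular $G$ contains no $K_r(t)$; I will not even need to be that careful, since it suffices to work with the Erd\H os--Stone machinery at the level of removing a few vertices. The idea is: run Theorem~\ref{nikro} not on $G$ directly (which may contain $K_r$, so the hypothesis fails) but on a subgraph of $G$ obtained after deleting a bounded number of vertices that destroys all copies of $K_r$ while retaining almost all edges.

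The key steps, in order, are as follows. (1) Apply a removal-type / Erd\H os--Simonovits cleaning argument: since $G$ is $K_r(t)$-free and has $(\tfrac{r-2}{2r-2}-\alpha')n^2$ edges, by supersaturation $G$ has few copies of $K_r$ — more precisely, one can iteratively delete vertices lying in many copies of $K_r$. A cleaner route: let $H$ be a maximal induced subgraph of $G$ that is $K_r$-free; I claim $|V(H)|\ge (1-c\alpha')n$ and $|E(H)|\ge |E(G)| - c'\alpha' n^2$ for appropriate constants, because if more than $c\alpha' n$ vertices had to be removed, each removed vertex was contained in a $K_{r-1}$ inside the then-current graph, and assembling these forces a $K_r(t)$ (each such $K_{r-1}$ together with $t$ common neighbours within a large remaining set gives the forbidden blow-up), contradiction once $n$ is large and $\alpha'$ small. (2) Now $H$ is $K_r$-free on $n' = |V(H)| \ge (1-c\alpha')n$ vertices with at least $(\tfrac{r-2}{2r-2}-\alpha')n^2 - c'\alpha' n^2 \ge (\tfrac{r-2}{2r-2}-C\alpha')(n')^2$ edges. (3) Apply Theorem~\ref{nikro} to $H$ with parameter $\alpha = C\alpha'$ (absorbing constants; this is where one needs $C\alpha' \le d_r$, which holds by choosing $d_r'$ small): it yields an induced $r$-chromatic subgraph $G'$ of $H$ of order at least $(1-2(C\alpha')^{1/3})n'$ with minimum degree at least $(\tfrac{r-2}{r-1}-4(C\alpha')^{1/3})n'$. (4) Finally translate the bounds from $n'$ back to $n$: since $n' \ge (1-c\alpha')n$, after re-absorbing constants into the $\alpha'^{1/3}$ term (using $\alpha' \le (\alpha')^{1/3}$ for small $\alpha'$) we get order at least $(1-2(\alpha')^{1/3})n$ and minimum degree at least $(\tfrac{r-2}{r-1}-4(\alpha')^{1/3})n$, possibly after rechoosing the leading constants — and since the proposition, as stated, allows us to pick $d_r'$ and is only about orders of magnitude, a clean way to phrase it is to prove it with constants $2C_1$ and $4C_1$ and then observe the statement follows after renaming, or simply note we may shrink $d_r'$ and enlarge absorbed constants freely. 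Note $G'$ need not be claimed induced here (the proposition only asks for an $r$-chromatic subgraph), which removes any friction in the last step.

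The main obstacle is step~(1): making precise the claim that deleting a bounded-in-$\alpha' n$ number of vertices suffices to destroy all copies of $K_r$ in an $F$-free (equivalently $K_r(t)$-free) graph of this density, and that this costs only $O(\alpha' n^2)$ edges. The honest tool here is the graph removal lemma or the Erd\H os--Simonovits stability/supersaturation framework: a $K_r(t)$-free graph has $o(n^{|V(F)|})$ copies of $K_r$ — in fact at most $c\alpha' n^{r}$ after the density is pinned near the Tur\'an threshold minus $\alpha' n^2$ is not automatic, so instead I would run the standard greedy deletion: while some vertex lies in at least $\varepsilon n^{r-1}$ copies of $K_{r-1}$ (in current graph), delete it; this can happen at most $n/1$ times trivially but each deletion removing $\le n$ edges would be too lossy, so one must argue the process stops after $O(\alpha' n)$ steps because otherwise the total count of $K_r(t)$'s exceeds $0$. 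I expect to handle this by the clean ``every vertex of the deleted set, at the time of deletion, has a $K_{r-1}$ in its neighbourhood within a common large set, and $t$-fold iterating produces $K_r(t)$'' argument sketched above, which is routine but is the only place real work is needed; everything after Theorem~\ref{nikro} is bookkeeping with constants.
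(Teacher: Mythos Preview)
Your overall architecture---reduce to a $K_r$-free graph and then invoke Theorem~\ref{nikro}---is exactly the paper's, but the reduction you commit to is different and, as stated, does not work. The paper does the reduction by deleting \emph{edges}: it combines the Alon--Shikhelman bound (an $F$-free graph with $\chi(F)=r$ contains $o(n^r)$ copies of $K_r$) with the graph removal lemma to delete $o(n^2)$ edges and obtain a $K_r$-free graph on the same vertex set, then applies Theorem~\ref{nikro} directly. You instead try to delete \emph{vertices}, claiming that a maximal $K_r$-free induced subgraph $H$ satisfies $|V(H)|\ge (1-c\alpha')n$.

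That claim is false. Take $G=T_{r-1}(n)$ together with a perfect matching $M$ inside one part $P_1$. This graph has $t_{r-1}(n)+\Theta(n)$ edges, so the hypothesis of the proposition is met for every $\alpha'>0$. It is $K_r(2)$-free (hence $F$-free for any $r$-chromatic $F$ with at least two vertices in each colour class): any $K_r$ in $G$ must use a matching edge $\{a,b\}\subset P_1$ together with one vertex from each of the other $r-2$ parts, and one checks that no two such cliques can be merged into a $K_{2,2,\dots,2}$. However, to destroy all copies of $K_r$ by vertex deletion you must either hit every matching edge or wipe out an entire part $P_k$; either way you must delete $\Theta(n)$ vertices, not $O(\alpha' n)$. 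So the ``assembling these $K_{r-1}$'s forces a $K_r(t)$'' heuristic you describe as routine is simply wrong, and the greedy vertex-deletion variant you propose at the end fails for the same reason.

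The fix is precisely the tool you mention and then set aside: use the removal lemma to delete $\delta n^2$ edges (with $\delta$ as small as you like once $n$ is large), obtaining a $K_r$-free graph on $n$ vertices with at least $\bigl(\tfrac{r-2}{2r-2}-\alpha'-\delta\bigr)n^2$ edges, and apply Theorem~\ref{nikro} with parameter $\alpha'+\delta$. Your steps (2)--(4) then go through with $n'=n$, which also removes the need for the back-translation in step~(4).
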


\begin{proof} 
By a result of Alon and Shikhelman \cite{as}, for any $\varepsilon>0$, if $n$ is large enough, then any $n$-vertex $F$-free graph contains at most $\varepsilon n^{|V(H)|}$ copies of $K_r$. By the removal lemma, for any $\delta>0$ there is $\delta>0$ such that if an $n$-vertex graph contains  at most $\varepsilon n^{|V(H)}$ copies of $K_r$, then we can delete at most $\delta n^2$ edges to obtain a $K_r$-free graph. Let $d_r'$ be any number smaller than $d_r$ from Theorem \ref{nikro}, $\delta\le d_r-\alpha'$ be a constant, $\varepsilon$ be as needed to apply the removal lemma, and $n$ be large enough so that we can use the result of Alon and Shikhelman. Then we can apply the removal lemma and delete $\delta n^2$ edges to obtain a $K_r$-free graph. Then we can apply Theorem \ref{nikro} to this graph to find the desired $G'$.
\end{proof}

The simple proof of the above proposition works for many other stability results concerning $K_{r+1}$. Let us mention another example without going into details: Kor\'andi, Roberts and Scott \cite{krs} considered $K_{r+1}$-free graphs with at least $t_r(n)-\delta_r n^2$ edges, and determined the largest number of edges one may need to remove from such a graph to obtain an $r$-partite graph. If we consider an $F$-free graph where $F$ has chromatic number $r+1$, then we delete $o(n^2)$ edges first to remove the copies of $K_{r+1}$ as in the above, and then apply their theorem to obtain an upper bound on the number of edges we additionally need to remove. This bound will not be sharp for two reason: we started with more edges (by $o(n^2)$, and we also removed those edges), and their $K_{r+1}$-free construction showing the sharpness of their result may contain $F$. If $F$ contains $K_{r+1}$, the second problem does not occur, and we obtain an asymptotically sharp result.

\smallskip

The above proof method, i.e. the combination of the result of Alon and Shikhelman and the removal lemma does not help with Conjecture \ref{nagysej}, as we have to remove almost quadratic many edges when using the removal lemma. We can prove a stronger lemma for a much smaller class of graphs.

\begin{lemma}\label{harmad2} Let $F$ be a $3$-chromatic graph with a color-critical vertex and $n$ be large enough. 
Let $\frac{20|V(F)|}{n}<\alpha<\frac{1}{11|V(F)|^2}$. If $G$ is an $n$-vertex $F$-free graph with $|E(G)|\ge \ex(n,F)-\alpha n^2$, then there is a bipartite subgraph $H$ of $G$ with at least $(1-12|V(F)|\alpha)n$ vertices, at least $\ex(n,F)-13k\alpha n^2$ edges and minimum degree at least $\left(\frac{1}{2}-\frac{1}{11|V(F)|}\right)n$ such that every vertex of $H$ is adjacent in $G$ to at most $|V(F)|$ vertices in the same partite set of $H$. 
\end{lemma}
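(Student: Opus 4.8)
The plan is to start from an $F$-free graph $G$ with $|E(G)|\ge \ex(n,F)-\alpha n^2$ and first pass to a large subgraph of large minimum degree by the standard iterative-deletion argument. Since $\ex(n,F)=(1+o(1))t_2(n)=(1+o(1))n^2/4$ by Erd\H os--Stone--Simonovits (or, more precisely, since $F$ is $3$-chromatic with a color-critical vertex, $\ex(n,F)=t_2(n)+O(1)$ by Simonovits' theorem, though we only need the asymptotics here), a graph with $n^2/4-\alpha n^2-o(n^2)$ edges cannot have too many low-degree vertices. Concretely, repeatedly delete a vertex of degree less than $(\tfrac12-\tfrac1{11|V(F)|})n$. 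Each deletion removes fewer than that many edges, so if we delete $t$ vertices we have destroyed at most $t\cdot(\tfrac12-\tfrac1{11|V(F)|})n$ edges while also losing the "room" between $\ex(n,F)$ and $t_2(n)$; a short computation shows $t\le c|V(F)|\alpha n$ for a suitable constant (one checks $t\le 12|V(F)|\alpha n$ works with the given range of $\alpha$), because otherwise the remaining graph would have more than $t_2(n-t)$ edges, contradicting the trivial bound $|E(G')|\le \binom{n'}{2}$ is too weak — instead one uses that an $F$-free graph on $n'$ vertices has at most $\ex(n',F)\le n'^2/4+o(n'^2)$ edges, and the edge deficit forces $t$ to be small. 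Call the resulting graph $G_1$ on $n_1\ge (1-12|V(F)|\alpha)n$ vertices with minimum degree at least $(\tfrac12-\tfrac1{11|V(F)|})n$; it still has at least $\ex(n,F)-13|V(F)|\alpha n^2$ edges (the stated bound, with $k=|V(F)|$).

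Next I would extract the bipartition. Because $\delta(G_1)>(\tfrac12-\tfrac1{11|V(F)|})n$ and $G_1$ is $F$-free, I want to invoke an Andr\'asfai--Erd\H os--S\'os-type statement: a $3$-chromatic graph with a color-critical vertex has the property that $F$-free graphs of min-degree exceeding $(\tfrac13+\varepsilon)n$ (something well below $\tfrac12$) are bipartite, but here we have the much stronger bound $\approx\tfrac12 n$, so $G_1$ is bipartite outright — or at least it is after deleting the $F$-copies, but it is already $F$-free, so we genuinely get that $G_1$ itself is bipartite once its minimum degree is large enough relative to the chromatic threshold. Actually, to keep the paper self-contained as promised, I would avoid quoting Erd\H os--Simonovits and instead argue directly: $F$ has a color-critical vertex $v$, and $F-v$ is bipartite, say with parts of sizes $a,b$ and $|V(F)|=a+b+1=:k$. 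Take $H$ to be a maximum-cut bipartition of $G_1$ (or, better, the bipartite graph obtained by fixing a proper $2$-coloring — but $G_1$ may contain odd cycles). Here is the cleaner route: since $\delta(G_1)\ge(\tfrac12-\tfrac1{11k})n\ge(\tfrac12-\tfrac1{11k})n_1$, and $n_1$ is large, a classical result (or a direct neighborhood-overlap argument) shows $G_1$ contains no short odd cycle longer arguments would be needed; instead take $H$ = the bipartite subgraph with parts $A,B$ maximizing $e(A,B)$. Every vertex $x$ then satisfies $d_H(x)\ge d_{G_1}(x)/2$, which is only $\approx n/4$ — not enough. So the max-cut route alone is too lossy, and the honest approach is to prove $G_1$ is bipartite.

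To prove $G_1$ is bipartite directly: suppose not; then $G_1$ contains an odd cycle. Using $F$-freeness plus the color-critical vertex together with the enormous minimum degree, derive a contradiction. The mechanism: if $uv\in E(G_1)$, then $|N(u)\cap N(v)|\ge 2\delta(G_1)-n_1\ge (1-\tfrac2{11k})\cdot? $ — wait, $2\delta-n_1\ge 2(\tfrac12-\tfrac1{11k})n_1-n_1 = -\tfrac2{11k}n_1<0$, so adjacent vertices need not have common neighbors; but non-adjacent vertices $u,v$ have $|N(u)\cap N(v)|\ge 2\delta-n_1$, still negative. Hmm, $\delta\approx n/2$ is not quite enough for common-neighbor arguments. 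The right tool is exactly the Andr\'asfai--Erd\H os--S\'os theorem and its extension by Erd\H os--Simonovits: for an $(r+1)$-chromatic $F$ with a color-critical edge/vertex, there is $\varepsilon_F>0$ such that every $F$-free graph with $\delta>(1-\tfrac1{r}-\varepsilon_F)\cdot\frac{n}{?}$... for $r=2$ this threshold is $(\tfrac2{5}+o(1))n$ for triangles — below $\tfrac12$, so $\delta(G_1)\ge(\tfrac12-\tfrac1{11k})n$ beats it once $k$ is fixed and $n$ large, giving that $G_1$ is bipartite. I will therefore cite Lemma \ref{harmad} (with $r=2$), which gives $T\subset V(G_1)$, $|T|\le d_F\alpha' n$ for the relevant $\alpha'$, with $G_1-T$ bipartite; but since $G_1$ already has min-degree $\approx n/2$, any nonempty $T$ would leave a vertex of degree $\ge n/2-|T|$ in a bipartite graph on $<n$ vertices, which is fine — that's consistent. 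Hmm, so $G_1-T$ bipartite doesn't immediately give $G_1$ bipartite. Instead, with $\delta(G_1)$ that large one shows $T=\emptyset$ is forced: otherwise pick $x$ with its $\ge(\tfrac12-\tfrac1{11k})n$ neighbors; at least $(\tfrac12-\tfrac1{11k})n-|T|$ of them lie in $G_1-T$, landing in (say) part $A'$; their neighborhoods then mostly lie in $B'$, and counting edges one sees $G_1-T$ already uses nearly all of $n^2/4$, leaving no edges touching $T$ inside $G_1$, contradiction with $\delta(x)$ for $x\in T$. This is the crux of the argument.

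The main obstacle, then, is this last point: turning "large min-degree $+$ $F$-free" into "genuinely bipartite, not just bipartite after removing a few vertices." Once $G_1=H$ is bipartite with parts $A,B$, the remaining clause — that every vertex of $H$ has at most $|V(F)|$ neighbors (in $G$, i.e. in $G_1$) inside its own part — follows because $F$ has a color-critical vertex $v$ with $F-v$ bipartite on $\le k-1$ vertices: if some $x\in A$ had $k$ neighbors $y_1,\dots,y_k\in A$, then since $\delta(G_1)\ge(\tfrac12-\tfrac1{11k})n$ each $y_i$ has $\ge(\tfrac12-\tfrac1{11k})n$ neighbors, almost all in $B$, so by inclusion-exclusion the $y_i$ plus $x$ have many common neighbors in $B$ — enough to embed $F-v$ into $B\cup\{x\}$ using $x$ as one side and $k-1$ common neighbors as the rest, then $v\mapsto$ one of the $y_i$? — more carefully, embed $F$ with $v$ mapped to $x$ is wrong since $x$'s problematic neighbors are in $A$; the correct embedding maps the color-critical vertex to a vertex whose whole neighborhood in $F$ lands in $A$, using $x$'s within-part neighbors together with common neighbors in $B$. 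A clean sufficient count: any $k$ vertices in $A$ have $\ge k\delta-(k-1)n \ge k(\tfrac12-\tfrac1{11k})n-(k-1)n$, which is negative for $k\ge 2$ — so again naive inclusion-exclusion fails and one must instead iterate pairwise-intersection bounds or use that $|B|\approx n/2$ and each $y_i$ misses only $\le n/2-(\tfrac12-\tfrac1{11k})n=\tfrac{n}{11k}$ vertices of $B$, so $k$ of them jointly miss $\le \tfrac{kn}{11k}=\tfrac{n}{11}<|B|$ vertices, leaving $\ge|B|-n/11$ common neighbors in $B$; among those pick a copy of $F-v$ (possible since that graph has $\le k-1$ vertices and we have room), with the $A$-side of $F-v$ filled by $x$ and, if needed, more vertices — and finally $v$ is placed at one of the $y_i$ adjacent to all the chosen $B$-vertices. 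This is routine but must be done; I expect the bipartiteness step to be the real obstacle and this within-part-degree step to be a short verification.
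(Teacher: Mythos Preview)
Your proposal has a genuine gap: you try to prove that the high-min-degree subgraph $G_1$ is itself bipartite, but this is simply false in general. For instance, take $F=K_{1,k,k}$ (the generic case, since any $3$-chromatic graph with a color-critical vertex embeds in some $K_{1,k,k}$) and let $G$ be $T_2(n)$ with a perfect matching added inside one part. This graph is $F$-free, has minimum degree $\ge n/2$, and is not bipartite; no amount of Andr\'asfai--Erd\H os--S\'os reasoning will make it so. Correspondingly, the lemma does \emph{not} assert that $G_1$ is bipartite --- it asserts that there is a bipartite subgraph $H$ in which each vertex still has up to $|V(F)|$ neighbours (in $G$) inside its own part. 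Those within-part edges are allowed to survive in $G$; they are only deleted to form $H$.

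You actually had the right move in hand and discarded it. The max-cut partition $(A,B)$ of $G_1$ is exactly what the paper uses. Your objection --- that max-cut only guarantees $d_H(x)\ge d_{G_1}(x)/2\approx n/4$ --- is just the trivial bound; the whole point is to show that the within-part degree is not $\approx n/4$ but in fact at most $|V(F)|$, so that deleting within-part edges costs only $O(1)$ per vertex and the minimum degree of $H$ stays at $(\tfrac12-\tfrac1{11k})n - O(1)$. The paper does this in two stages: first, Erd\H os--Simonovits stability gives $o(n^2)$ edges inside the parts; then a ``good/bad'' argument (a vertex is \emph{bad} if its within-part degree exceeds $\tfrac{1}{22k}n'$) shows that a bad vertex would have many bad neighbours in its own part, and $\Theta(n)$ bad vertices each with $\Theta(n)$ within-part neighbours would contradict the $o(n^2)$ bound. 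Once every vertex is good, your own endgame calculation applies: any $k+1$ vertices in $A$ jointly miss at most $(k+1)\cdot\tfrac{3}{22k}n'<|B|$ vertices of $B$, so they have $k$ common neighbours in $B$, yielding a $K_{1,k,k}\supseteq F$. Hence within-part degrees are at most $k-1<|V(F)|$, and $H$ is obtained from $G_1$ by deleting those $O(n)$ within-part edges.
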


\begin{proof} Observe that $F$ is a subgraph of $K_{1,k,k}$ for some $k$. Simonovits \cite{miki} showed that for the complete $(r+1)$-partite graph $K=K_{1,k,\dots,k}$ we have $\ex(n,K)\le t_r(n)+kn$ (in fact he obtained a more general result, that implies an exact result for $\ex(n,K)$, and he also described the asymptotic structure). This implies that $\ex(n,F)\le \frac{1}{4}n^2+kn$. 

We start by removing vertices of small degree, like the proofs of Theorem \ref{posasn} in \cite{pss} and Theorem \ref{kinai} in \cite{wwyy}. 
Let $G_0=G$ and given $G_i$ on $n_i=n-i$ vertices, if every vertex of $G_i$ has degree at least $(\frac{1}{2}-\frac{1}{11k})n_i$, then we let $G'=G_i$. If there is a vertex $v$ with degree less than $(\frac{1}{2}-\frac{1}{11k})n_i$, then we let $G_{i+1}$ be the graph obtained from $G_i$ by deleting $v$.
Let $n'$ be the number of vertices of $G'$. We have \begin{equation}\label{eq1}
|E(G)|\le |E(G')|+\sum_{i=0}^{n-n'-1} (\frac{1}{2}-\frac{1}{11k})(n-i).    
\end{equation} 

The right hand side of (\ref{eq1}) is at most $\frac{1}{4}n^2-\frac{1}{11k}(n^2-{n'}^{2})/2+kn'$, while the left hand is at least $\frac{1}{4}n^2-\alpha n^2$. This shows that $n'\ge (1-12k\alpha)n$. 

Let us consider a partition of $G'$ into two parts $A$ and $B$ with the most edges between parts. By the Erd\H os-Simonovits stability theorem, there are $o(n^2)$ edges inside the parts. We also have that if a vertex $v$ is connected to $d$ vertices in its part, say $A$, then it is connected to at least $d$ vertices in the other part $B$. On the other hand, $v$ is connected to at least $(\frac{1}{2}-\frac{1}{11k})n'-d$ vertices of $B$. This implies that $d\le (\frac{1}{4}-\frac{1}{22k})n'$. Thus we have that $v$ (and every other vertex) is connected to at least $(\frac{1}{4}-\frac{1}{22k})n'$ vertices in the other part. 

Let us assume that $|A|\ge |B|$. We call a vertex in $A$ \textit{good} if it is connected to at most $\frac{1}{22k}n'$ vertices in its part, and \textit{bad} otherwise. Observe that a good vertex has at least $(\frac{1}{2}-\frac{3}{22k})n'\ge |B|-\frac{3}{22k}n'$ neighbors in $B$. If $u\in A$ is connected to $k$ good vertices in $A$, then these $k+1$ vertices have at least $k$ common neighbors in $B$, thus there is a $K_{1,k,k}$ in $G$, a contradiction. Indeed, $u$ has at least $(\frac{1}{4}-\frac{1}{22k})n'$ neighbors in $B$, and each good vertex in $A$ is connected to all but at most $\frac{3}{22k}$ of these vertices.
thus the $k+1$ vertices chosen from $A$ have at least $(\frac{1}{4}-\frac{1+3k}{22k})n'\ge k$ common neighbors in $B$. This shows that every bad vertex has at least $\frac{1}{22k}n'-k$ bad neighbors in its part. If there exists a bad vertex, then there are $\Theta(n)$ bad vertices. Each of them is connected to $\Theta(n)$ vertices in the same part, thus there are $\Theta(n^2)$ edges inside the parts, a contradiction.

Therefore, we can assume that every vertex of $A$ is good, thus every vertex in $A$ is connected to at least $(\frac{1}{2}-\frac{3}{22k})n'$ vertices on the other side. This in particular shows that $(\frac{1}{2}-\frac{3}{22k})n'\le |A|,|B| \le (\frac{1}{2}+\frac{3}{22k})n'$.
Now we call a vertex in $A$ \textit{good} if it is connected to at most $\frac{1}{22k}n'$ vertices in its part, and \textit{bad} otherwise. Then a good vertex in $B$ has at least $(\frac{1}{2}-\frac{3}{22k})n'\ge |A|-\frac{6}{22k}n'$ neighbors in $A$. By the same reasoning as for bad vertices in $A$, we obtain that the existence of one bad vertex in $B$ would imply the existence of $\Theta(n)$ bad vertices in $B$ and $\Theta(n^2)$ edges inside the parts, a contradiction. Thus we can assume that every vertex is good.

Assume now that a vertex $v\in A$ is connected to at least $k$ vertices in $A$. Then $v$ and $k$ of its neighbors in $A$ are each connected to all but at most $\frac{3}{11k}n'$ vertices of $B$. Thus the number of vertices in $B$ that are not connected to some of them is at most $\frac{3}{11k}(k+1)n'$. Therefore, there are at least $k$ other vertices in $B$, those are common neighbors of the $k+1$ vertices picked earlier, hence they form a copy of $K_{1,k,k}$, a contradiction.

This shows that there are at most $(k-1)n<\alpha n^2$ edges inside $A$ and $B$.
Let us delete all the edges inside the parts $A$ and $B$ from $G'$ to obtain $H$. Clearly we have deleted at most $12k\alpha n^2$ edges to get $G'$ and at most $\alpha n^2$ edges to get $H$. The bounds on the number of vertices and the minimum degree of $H$ are obvious.
\end{proof}

Recall that the bipartite graph we found is not necessarily induced. However, for some graphs we can strengthen the above result.

\begin{corollary}\label{corola} Let $F$ be a 3-chromatic graph with a critical vertex such that $F$ can also be obtained from a bipartite graph by adding a matching into one of the parts.
Let $n$ be large enough and $\frac{20|V(F)|}{n}<\alpha<\frac{1}{11|V(F)|^2}$. If $G$ is an $n$-vertex $F$-free graph with $|E(G)|\ge \ex(n,F)-\alpha n^2$, then there is an induced bipartite subgraph $H'$ of $G$ with at least $(1-13\alpha)n$ vertices, at least $\ex(n,F_k)-14k\alpha n^2$ edges and minimum degree at least $(\frac{1}{2}-\frac{1}{10|V(F)|})n$. 
\end{corollary}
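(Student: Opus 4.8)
The plan is to run $G$ through Lemma \ref{harmad2} to obtain a (not necessarily induced) bipartite subgraph $H$ with parts $A, B$ in which every vertex of $G$ sends at most $k := |V(F)|$ edges into its own part, and then to show that all edges of $G$ lying inside $A$ or inside $B$ are covered by a bounded set of vertices; deleting that set yields the required induced bipartite $H'$. Write $F \subseteq F_0 + M$ with $F_0$ bipartite on parts $X, Y$ and $M$ a matching inside $X$, and put $m := |M|$, a constant with $m \ge 1$ since $F$ is not bipartite.

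The crux is the claim that the matching numbers $\nu(G[A])$ and $\nu(G[B])$ are both at most $m-1$. Suppose $G[A]$ had a matching of size $m$. From the minimum degree $(\tfrac12 - \tfrac1{11k})n$ of $H$ and $|V(H)| \le n$ one gets $|A|, |B| \ge (\tfrac12 - \tfrac1{11k})n$, hence every vertex of $A$ fails to be $G$-adjacent to at most $\tfrac{2}{11k}n$ vertices of $B$, and so any $k$ vertices of $A$ have a common $G$-neighbourhood of linear size inside $B$. I would then embed $F_0 + M$ into $G$ greedily in a careful order: first map the matched vertices of $X$ onto the endpoints of the size-$m$ matching in $G[A]$, honouring the pairing prescribed by $M$, so that the $M$-edges are realised inside $A$; next place the remaining vertices of $X$ in $A$ one at a time, dodging the constantly many used vertices; finally place the vertices of $Y$ in $B$, sending each into a common $G$-neighbour of the already-embedded images of its $F_0$-neighbours that avoids the used vertices, which is possible because that common neighbourhood is linear. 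The resulting copy of $F$ contradicts $F$-freeness, and the same argument applies with the roles of $A$ and $B$ exchanged.

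Granting the claim, let $S_A$ and $S_B$ be the vertex sets of maximum matchings of $G[A]$ and $G[B]$; being maximal matchings, these are vertex covers, so $|S_A|, |S_B| \le 2(m-1)$, and $H' := G[V(H) \setminus (S_A \cup S_B)]$ has no edge inside $A \setminus S_A$ or inside $B \setminus S_B$, hence is an induced bipartite subgraph of $G$. As we deleted at most $4(m-1) \le 4k$ vertices, and $\alpha n \ge 4$ by the lower bound on $\alpha$ with $n$ large, the stated bounds on the number of vertices, number of edges, and minimum degree of $H'$ follow from the corresponding bounds for $H$ supplied by Lemma \ref{harmad2} by a routine computation.

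The step I expect to cause the most trouble is the greedy embedding: one has to order the vertices of $F$ so that the pre-selected matching of $G[A]$ is consumed first (otherwise the $M$-edges need not be available), and then verify that at every later step the set of forbidden images remains sublinear, which is precisely where the finiteness of $F$ and the bound on non-neighbourhoods summed over at most $k$ vertices get used. The rest is bookkeeping with the constants from Lemma \ref{harmad2}.
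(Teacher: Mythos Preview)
Your proposal is correct and follows essentially the same route as the paper: apply Lemma~\ref{harmad2}, argue that a large matching inside a part would let one embed $F$ by using the linear-sized common neighbourhoods across the bipartition, and then delete $O(1)$ vertices to kill all intra-part edges. Your version is in fact slightly tidier than the paper's---you bound the matching number by $m-1$ rather than $|V(F)|-1$ and pass directly to a vertex cover, whereas the paper first bounds the number of intra-part edges via a degree-plus-matching estimate and then removes all their endpoints---but the underlying argument is the same.
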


\begin{proof} Let $G'$ be the graph as in the proof of Lemma \ref{harmad2}. 
We will show that inside the partite sets $A,B$ of $G'$ there is no matching with $|V(F)|$ edges. Indeed, the $2|V(F)|$ vertices of that matching would have a common neighbor on the other partite set by the minimum degree condition, but this way we find a copy of $F$, a contradiction. This and the bound on the maximum degree implies that the subgraph of $G$ inside $A$ and inside $B$ has $(2|V(F)|-2)|V(F)|-1$ edges. Indeed, every edge shares a vertex with at most $2|V(F)|-2$ other edges, thus we can greedily pick $|V(F)|$ independent edges. See \cite{ahs} and \cite{chha} for more precise bounds on the number of edges.   

 Therefore, we can remove the endpoints of those $O(1)$ edges to obtain $H'$. It is easy to see that $H'$ has the desired number of vertices, edges and minimum degree, using that $n$ is large enough.
\end{proof}

We remark that if $F$ cannot be obtained from a bipartite graph by adding a matching into one of the parts, then a similar strengthening is impossible. Indeed, if we add a matching to one of the parts of the Tur\'an graph, the resulting graph is $F$-free, and we need to remove about $n/4$ vertices to obtain an induced complete bipartite graph.

\section{Proofs}

\begin{proof}[Proof of Theorem \ref{main}] Let $n$ be large enough and $G$ be an $n$-vertex $F$-saturated graph with $t_2(n)-o(n^{3/2})$ edges.
First we apply Lemma \ref{harmad2} with $\alpha=o(n^{-1/2})$ to obtain $H$ with partite sets $A$ and $B$. We will also use the subgraph $G'$ of $G$ from the proof of Lemma \ref{harmad2}, that is $H$ with additional edges inside the parts, such that every vertex is incident to at most $|V(F)|$ such edges. Let $T$ denote the set of vertices not in $G'$, thus $|T|=o(n^{1/2})$. For $v\in T$, let $A(v)$ denote its neighborhood in $A$ and $B(v)$ denote its neighborhood in $B$. Let $U(v)$ denote the smaller of $A(v)$ and $B(v)$ (if they have the same number of vertices, we choose one of them arbitrarily). Let $U_0=\cup_{v\in T} U(v)$ and for $1\le i\le |V(F)|$, $U_i$ denotes the set of vertices that are connected to a vertex of $U_{i-1}$ in the same parts. 
 As the degrees inside $A$ and $B$ are at most $|V(F)|$, we have that $|U_{|V(F)|}|\le |V(F)|^{|V(F)|}|U_0|$.
 
 Let us now consider a partition of $F$ to two connected subgraphs $F_0$ and $F_1$ such that $w$ (the vertex that is connected to every other vertex of $F$) is in $F_1$. Let $Q$ denote the subgraph induced on the vertices in $F_1$ that are connected to some vertices of $F_0$. Then $w$ is in $Q$, thus $Q$ is also connected. 
 
 Consider the copies of $F_0$ inside $A$ and those copies of $Q$ inside $B$ that can be extended to a copy of $F_1$ in $G$ (note that we do not care where the additional vertices come from or how many such extensions exist). Observe that every vertex $v\in A$ is contained in $O(1)$ copies of $F_0$. Indeed, as $F_0$ is connected, there is a path of length at most $|V(F)|$ from $v$ to every vertex of such copies, and there are at most $|V(F)|^{|V(F)|}$ vertices in $A$ that can be reached from $v$ by a path of length at most $|V(F)|$ that is totally inside $A$. If $v\in B$, then there are $O(1)$ copies of $Q$ containing it by the same reasoning (in fact there is a path of length at most 2 from $v$ to other vertices of $Q$ in this case).
 
 Let us assume that there are at least $n^{3/4}$ copies of $F_0$ inside $A$ and at least $n^{3/4}$ copies of $Q$ inside $B$. For any such copy of $Q$, we pick an extension to $F_1$, and observe that it intersects $O(1)$ copies of $F_0$ inside $A$. For the other copies of $F_0$, there is a vertex $u$ in the copy of $F_0$ and a vertex $v$ in the copy of $Q$ such that $uv$ is not an edge in $G$, by the $F$-free property. This way for $n^{3/2}-O(n^{3/4})$ pairs of $F_0$ and $F_1$, we found a missing edge. As both $u$ and $v$ are counted $O(1)$ times, this means that $\Omega(n^{3/2})$ edges between $A$ and $B$ are missing from $G$. 
 There are at most $n^2/4-\Omega(n^{3/2})$ edges of $G$ between $A$ and $B$, $O(n)$ edges inside $A$ and $B$, and at most $n|T|=o(n^{3/2})$ edges of $G$ are incident to $T$. Therefore, the total number of edges of $G$ is at most the sum of these, contradicting our assumption.
 
 We obtained that there are less than $n^{3/4}$ copies of either $F_0$ in $A$ or $Q$ in $B$. We take the vertices of each to form the set $U_1'$. Then we repeat this with copies of $F_0$ in $B$ and copies of $Q$ in $A$ to obtain $U_2'$, and then with every other bipartition of $F$ into two connected parts to obtain sets $U_i'$ of vertices. Let $U$ be the union of all the sets $U_j$ and $U_i'$.
 
 \begin{clm}
 $|U|=o(n)$.
 \end{clm}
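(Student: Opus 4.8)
The plan is to bound $|U|$ by bounding each of the sets making it up. Recall that $U=\bigcup_{j=0}^{|V(F)|}U_j\cup\bigcup_i U_i'$. There is at most one set $U_i'$ for each partition of $F$ into two connected subgraphs together with its reverse, so there are $O(1)$ of them, and each $U_i'$ is a union of fewer than $n^{3/4}$ copies of a fixed subgraph of $F$; hence $|U_i'|<|V(F)|n^{3/4}$ and $\big|\bigcup_i U_i'\big|=O(n^{3/4})=o(n)$. Since $|U_j|\le|V(F)|^{|V(F)|}|U_0|$ for each $j$, also $\big|\bigcup_j U_j\big|=O(|U_0|)$. So the whole claim reduces to showing $|U_0|=o(n)$.

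Writing $U_0=\bigcup_{v\in T}U(v)$, and using that $|T|=o(\sqrt n)$, it is enough to prove $|U(v)|=o(\sqrt n)$ for every $v\in T$: then $|U_0|\le|T|\cdot\max_v|U(v)|=o(\sqrt n)\cdot o(\sqrt n)=o(n)$. To estimate $|U(v)|=\min(|A(v)|,|B(v)|)$ I would argue as follows. Since $F$ has a vertex joined to all others and is $3$-partite, $F\subseteq K_{1,k,k}$ for some $k$; and $v$ is adjacent in $G$ to every vertex of $A(v)\cup B(v)$. So a copy of $K_{k,k}$ in the bipartite subgraph of $G$ spanned by $A(v)$ and $B(v)$ would, together with $v$, give $K_{1,k,k}\supseteq F$ in $G$, contradicting $F$-freeness; thus that bipartite graph is $K_{k,k}$-free. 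On the other hand, from $|E(G)|\ge t_2(n)-o(n^{3/2})$, the fact that only $O(n)$ edges of $G$ lie inside $A$ or inside $B$, that only $n|T|=o(n^{3/2})$ edges meet $T$, and that $|A|\,|B|\le t_2(n)$, one gets that $G$ has only $o(n^{3/2})$ non-edges between $A$ and $B$, hence also between $A(v)$ and $B(v)$. By the K\H ov\'ari--S\'os--Tur\'an bound a $K_{k,k}$-free bipartite graph with parts of sizes $a\le b$ has $O(ab^{1-1/k}+b)$ edges, so near-completeness forces either $a=O(1)$ or $ab=o(n^{3/2})$; in either case $|U(v)|=o(n^{3/4})$, and in fact $|U(v)|=o(\sqrt n)$ whenever $\max(|A(v)|,|B(v)|)=\Theta(n)$.

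The hard part will be to push this down to $|U(v)|=o(\sqrt n)$ for the vertices $v\in T$ with an intermediate-sized neighbourhood — roughly $\sqrt n\ll|A(v)|\asymp|B(v)|\ll n$ — for which the estimate above gives only $|U(v)|=o(n^{3/4})$ and hence only the insufficient bound $|U_0|=o(n^{5/4})$. To handle these I would use that $G$ is $F$-\emph{saturated} and not merely $F$-free: every non-edge $e$ between $A(v)$ and $B(v)$ creates a copy of $F$ in $G+e$ through $e$, and since $v$ is adjacent to both endpoints of $e$ the positions of the universal vertex $w$ of $F$ and of its colour classes in that copy are severely restricted. Combining this with the facts already established in the proof of Theorem~\ref{main} — few copies of the connected pieces $F_0$ of $F$ inside $A$, and few copies of $Q$ inside $B$ — I would try to show that the non-edges cannot accumulate densely around any single $v$, so that the bipartite graph between $A(v)$ and $B(v)$ is dense enough to force $\min(|A(v)|,|B(v)|)=o(\sqrt n)$. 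Turning this into a clean argument is the place where I expect the real work to be.
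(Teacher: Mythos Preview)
Your reduction of the problem to showing $|U_0|=o(n)$ is correct and matches the paper. The gap is in your strategy for bounding $|U_0|$: you try to prove the pointwise bound $|U(v)|=o(\sqrt n)$ for every $v\in T$, and you correctly identify that this fails in the ``intermediate'' regime $\sqrt n\ll |A(v)|\asymp|B(v)|\ll n$. Your proposed rescue via saturation is both unnecessary and unclear; the claim does not require the saturation hypothesis at all.

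The paper avoids the pointwise bound entirely. The key observation is that you do not need the full strength of K\H ov\'ari--S\'os--Tur\'an: all you need is that for $|U(v)|$ exceeding some fixed constant $m$, the $F'$-free bipartite graph between $|U(v)|$ vertices of $A(v)$ and $|U(v)|$ vertices of $B(v)$ has at most $\tfrac{4}{5}|U(v)|^2$ edges (here $F'=F-w$ is bipartite, so $\ex(n,F')=o(n^2)$ and such an $m$ exists). Hence at least $\tfrac{1}{5}|U(v)|^2$ non-edges of $H$ between $A$ and $B$ lie in this neighbourhood. Summing over $v\in T'=\{v\in T:|U(v)|\ge m\}$ gives $\sum_{v\in T'}|U(v)|^2=o(n^{3/2})$, since the total number of non-edges of $H$ between $A$ and $B$ is $o(n^{3/2})$. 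Now Cauchy--Schwarz finishes:
\[
\Bigl(\sum_{v\in T'}|U(v)|\Bigr)^2\le |T'|\sum_{v\in T'}|U(v)|^2=o(\sqrt n)\cdot o(n^{3/2})=o(n^2),
\]
so $\sum_{v\in T'}|U(v)|=o(n)$; the vertices with $|U(v)|<m$ contribute only $m|T|=o(\sqrt n)$ more. The point is that a \emph{quadratic} lower bound on the number of missing edges near each $v$, combined with the global bound on missing edges and the smallness of $|T|$, controls the \emph{sum} $\sum_v|U(v)|$ directly --- no individual $|U(v)|$ needs to be small.
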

 
 \begin{proof}
 For every $i$, $U_i'$ has cardinality at most $n^{3/4}$, and we have constant many of them, thus their total cardinality is $O(n^{3/4})$. We also have $O(1)$ copies of $U_j$, each of cardinality $O(|U_0|)$, thus it is enough to show that $|U_0|=o(n)$.
 
 Let $F'$ be the bipartite graph we obtain by deleting $w$ from $F$. By the K\H ov\'ari-T. S\'os-Tur\'an theorem \cite{kst}, $\ex(n,F')=o(n^2)$, thus there exists an $m$ such that if $n\ge m$, then $\ex(n,F')\le n^2/5$. Consider a vertex $v\in T$. For the vertices $v$ with $|U(v)|< m$, altogether at most $m|T|=o(n)$ vertices are in the sets $U(v)$. Let $T'$ denote the set of vertices $v\in T$ with $|U(v)|\ge m$. We take $|U(v)|$ vertices from both $A(v)$ and $B(v)$, and consider the bipartite graph $G(v)$ defined by the edges of $H$ between these subsets. Clearly $G(v)$ is $F'$-free, thus there are at most $4|U(v)|^2/5$ edges of $G'$ between these two parts and at least $|U(v)|^2/5$ edges are missing. This shows that $\Omega(|U(v)|^2)$ edges are missing between $A(v)$ and $B(v)$.
 
 We know that in total $o(n^{3/2})$ edges are missing between $A$ and $B$, thus $\sum_{v\in T'}|U(v)|^2=o(n^{3/2})$. On the other hand, by the Cauchy-Schwartz inequality we have that $\sum_{v\in T'}|U(v)|^2\ge (\sum_{v\in T'}|U(v)|)^2/|T'|$. This implies $\sum_{v\in T'}|U(v)|=o(n)$, finishing the proof.
 \end{proof}
 
 Let us return to the proof of the theorem. Let $G_1$ be the graph we obtain by deleting the vertices of $U$ from $H$. We will show that $G_1$ is a complete bipartite graph with partite sets $A'=A\setminus U$ and $B'=B\setminus U$. Assume indirectly that $u\in A'$, $v\in B'$ and $uv$ is not an edge of $G_1$, thus not an edge of $G$. Then adding the edge $uv$ to $G$ creates a copy of $F$, which we denote by $F^*$. The vertex $w$ of $F^*$ is either $u$, $v$, or connected to both $u$ and $v$, thus cannot be in $T$. Assume without loss of generality that $w\in A$. 

Let $R_1$ denote the set of the neighbors of $v$ in $F^*$, then they are either in $A$, or in $B$. But in $B$, elements of $R_1$ cannot belong to $U_i$ with $0\le i\le |V(F)|-1$ (as then $v$ would be in $U_{i+1}$ and not in $B'$). Let $R_j$ for $j<|V(F)|$ denote the set of neighbors of the vertices of $R_{j-1}\cap B$ in $F^*$. Then similarly, we have that elements of $R_j$ belong to $A$ or $B$, and those in $B$ cannot belong to $U_i$ with $0\le i\le |V(F)|-j$. $R_j$ stops increasing before we arrive to $U_0$, let $R$ denote the final $R_j$ obtained this way. Then 
%the vertices of $R\cap B$ are all in $A$. Also, 
$R\cap B$ induces a connected subgraph of $F^*$ by construction. Let $F_0$ denote this subgraph, $F_1$ denote the remaining part of $F^*$ and $Q$ denote the subgraph of $F^*$ induced on $R\cap A$. Then this is a partition as described in the construction of $U$, thus we have moved each vertices of $R\cap A$ or $R\cap B$ to $U$. In particular $u$ or $v$ is in $U$ and not in $G_1$, a contradiction.
\end{proof}

\begin{proof}[Proof of Theorem \ref{elkrit}] We start the proof similarly to that of Theorem \ref{main}, but the set $U$ of vertices we delete will be slightly different.
 We apply Corollary \ref{corola} with $\alpha=o(n^{-1/2})$ to obtain $H'$ with partite sets $A$ and $B$. Let $T$ denote the set of vertices not in $H'$, thus $|T|=o(n^{1/2})$. For each vertex $v\in T$, let $A(v)$ denote its neighborhood in $A$, $B(v)$ denote its neighborhood in $B$ and $U(v)$ denote the smaller of $A(v)$ and $B(v)$ (an arbitrary one of them in case they have the same size), as in the proof of Theorem \ref{main}. We let $U_0=\cup_{v\in T} U(v)$, thus $|U_0|=o(n)$ as in the proof of Theorem \ref{main}.
 
 Observe that every vertex $u\in A(v)$ is connected to less than $|V(F)|$ vertices in $B(v)$. Indeed, otherwise these $|V(F)|$ vertices have $|V(F)|$ other common neighbors in $A$ by the minimum degree condition, and these $2|V(F)|$ vertices together with $u$ and $v$ form $K_{|V(F)|,|V(F)|+2}$ with an additional edge in one of the parts. This subgraph clearly contains $F$, a contradiction.
 
 For every $v\in T$, this means that the vertices of $U(v)$ have at most $|V(F)||U(v)|$ common neighbors with $v$ in the other partite set of $H'$. Let $U'(v)$ be the set of those common neighbors and let $U'=\cup_{v\in T} U'(v)$. Then $|U'|\le |V(F)||U_0|=o(n)$.
 
 Consider now an edge $uv$ inside $T$. We have that $u$ and $v$ have less than $|V(F)|$ common neighbors in $A$ (and at most $|V(F)|$ common neighbors in $B$) by the same reasoning: otherwise we can find $|V(F)|$ common neighbors of those vertices in $B$ by the minimum degree condition, giving us a copy of $K_{|V(F)|,|V(F)|+2}$, a contradiction. Let $U''(uv)$ denote the set of these less than $2|V(F)|$ common neighbors and let $U''=\cup_{u,v\in T, uv\in E(G)} U''(uv)$. Clearly $|U''|=o(n)$.
 
 Let $U=U_0\cup U'\cup U''$.
 We delete $U$ from $H'$ to obtain $G_2$. 
 
 Assume that $u\in A\setminus U'$, $v\in B\setminus U'$ and $uv$ is not an edge in $G_2$, thus not an edge in $G$. Then adding the edge $uv$ to $G$ creates a copy of $F$ that we denote by $F^*$. There is a triangle containing $u$ or $v$, say $uxy$ in $F^*$. One of its vertices, say $x$ is in $T$, since $H'$ is bipartite. Then $U(x)=B(x)$, since $u\not\in U_0$. As $y$ is connected to $u$, we have $y\in B\cup T$. If $y\in B$, then $y\in U_0$, but then its common neighbors with $x$, including $u$, were moved to $U'$, a contradiction. Thus $y\in T$, but then $u\in U''(xy)\subset U$, a contradiction.
\end{proof}


\begin{thebibliography}{99}
\bibitem{ahs}
 H. L. Abbott, F. Hanson, N. Sauer. Intersection theorems for systems of sets. \textit{Journal of Combinatorial Theory, Series A}, \textbf{12}(3), 381--389, 1972.

\bibitem{as} N. Alon, C. Shikhelman. Many $T$ copies in $H$-free graphs. \textit{Journal of Combinatorial
Theory, Series B}, \textbf{121}, 146--172, 2016.

\bibitem{aes} B. Andr\'asfai, P. Erd\H os, V. T. S\'os. On the connection between chromatic number,
maximal clique and minimum degree of a graph, \textit{Discrete Math.} \textbf{8}, 205--218, 1974.

\bibitem{br}
A. Brouwer. Some lotto numbers from an extension of Tur\'an's theorem. \textit{Afdeling
Zuivere Wiskunde [Department of Pure Mathematics]}, \textbf{152}, 1981.

\bibitem{chha} V. Chv\'atal, D. Hanson.  Degrees and matchings. \textit{Journal of Combinatorial Theory, Series B}, \textbf{20}(2), 128--138, 1976.

\bibitem{erd1} P. Erd\H os. Some recent results on extremal problems in graph theory, \textit{Theory
of Graphs} (Internl. Symp. Rome), 118--123, 1966.

\bibitem{erd2} P. Erd\H os. On some new inequalities concerning extremal properties of graphs, in Theory of Graphs  (ed P.
Erd\H os, G. Katona), Academic Press, New York, 1968, 77--81.

\bibitem{ES1966} P. Erd\H os, M. Simonovits. A limit theorem in graph theory. \textit{Studia Sci.
Math. Hungar.} \textbf{1}, 51--57, 1966.

\bibitem{ersim} P. Erd\H os, M. Simonovits. On a valence problem in extremal graph theory. \textit{Discrete Math.} \textbf{5}, 323--334, 1973.

\bibitem{ES1946} P. Erd\H os, A. H. Stone. On the structure of linear graphs. \textit{Bulletin of the
American Mathematical Society} \textbf{52}, 1087--1091, 1946.

\bibitem{krs} Dániel Kor\'andi, Alexander Roberts, Alex Scott. Exact stability for Tur\'an's Theorem, arXiv:2004.10685

\bibitem{kst} P. K{\H{o}}v{\'a}ri, V.T. S\'os, P. Tur\'an. On a problem of Zarankiewicz, Colloquium Mathematicum 33, 50--57, 1954.

\bibitem{nikro} V. Nikiforov, C.C. Rousseau. Large generalized books are $p$-good, \textit{Journal of Combinatorial Theory, Series B}, \textbf{92} 85--97, 2004.


\bibitem{pss} Kamil Popielarz, Julian Sahasrabudhe, Richard Snyder. A stability theorem for
maximal $K_{r+1}$-free graphs, \textit{Journal of Combinatorial Theory, Series B} \textbf{132},
236--257, 2018.

\bibitem{robscott} Alexander Roberts, Alex Scott. Stability results for graphs with a critical edge,  \textit{European Journal of Combinatorics}, \textbf{74}, 27--38, 2018.


\bibitem{miki} M. Simonovits. A method for solving extremal problems in graph theory, stability
problems, in: \textit{Theory of Graphs}, Proc. Colloq., Tihany, 1966, Academic Press, New
York, 279--319, 1968.

\bibitem{sim} M. Simonovits. Extremal graph problems with symmetrical extremal graphs. Additional chromatic conditions, \textit{Discrete Math.} \textbf{7} 349--376, 1974.


\bibitem{T1941} P. Tur\'an. On an extremal problem in graph theory (in Hungarian). \textit{Matematikai \'es Fizikai Lapok} \textbf{48}, 436--452, 1941.

\bibitem{tyuz} Mykhaylo Tyomkyn, Andrew J. Uzzell. Strong Tur\'an stability,
\textit{Electronic Journal of Combinatorics} \textbf{22}(3), P3.9, 2015.

\bibitem{wwyy} Jian Wang, Shipeng Wang, Weihua Yang, Xiaoli Yuan. A Stability Theorem for Maximal $C_{2k+1}$-free Graphs, arXiv:2011.11427


\end{thebibliography}
\end{document}